\documentclass[11pt]{amsart}
\usepackage{amsmath,amssymb,amsbsy,amsfonts,amsthm,latexsym,
            amsopn,amstext,amsxtra,euscript,amscd,color,mathrsfs}

\hoffset -1.5cm

\voffset -1cm

\textwidth 15.5truecm

\textheight 22.5truecm

\PassOptionsToPackage{hyphens}{url}\usepackage{hyperref}
 
 \usepackage[capbesideposition=outside,capbesidesep=quad]{floatrow}

\captionsetup{labelfont=bf, justification=justified, singlelinecheck=false, position=above}

\restylefloat{table}
\restylefloat{table}
            
\usepackage{multirow,caption}
            
\usepackage{amscd}
\usepackage{color,enumerate}

\newcommand{\RNum}[1]{\lowercase\expandafter{\romannumeral #1\relax}}

\setlength{\marginparwidth}{2.2cm}
\usepackage[colorinlistoftodos,prependcaption,textsize=tiny]{todonotes}

\newtheorem{thm}{Theorem}[section]
\newtheorem{lem}[thm]{Lemma}
\newtheorem{cor}[thm]{Corollary}

\newtheorem{thm-con}[thm]{Theorem-Conjecture}
\numberwithin{equation}{section}

\theoremstyle{definition}

\def\im{{\rm Im}}

\def\cB{{\mathcal B}}

\def\cW{{\mathcal W}}

\newcommand{\F}{\Bbb F}

\def\Tr{{\rm Tr}}

\begin{document}
\title[ The binary Gold function and its $c$-boomerang connectivity table]{The binary Gold function and its $c$-boomerang connectivity table}  
\author[S. U. Hasan]{Sartaj Ul Hasan}
\address{Department of Mathematics, Indian Institute of Technology Jammu, Jammu 181221, India}
\email{sartaj.hasan@iitjammu.ac.in}

\author[M. Pal]{Mohit Pal}
\address{Department of Mathematics, Indian Institute of Technology Jammu, Jammu 181221, India}
\email{2018RMA0021@iitjammu.ac.in}

\author[P.~St\u anic\u a]{Pantelimon~St\u anic\u a}
\address{Applied Mathematics Department, Naval Postgraduate School, Monterey 93943, USA}
\email{pstanica@nps.edu}

\begin{abstract}
Here, we give a complete description of the $c$-Boomerang Connectivity Table for the Gold function over finite fields of even characteristic, by using double Weil sums. In the process we generalize a result of Boura and Canteaut (IACR Trans. Symmetric Cryptol. 2018(3) : 290-310, 2018) for the classical boomerang uniformity.
\end{abstract}

\keywords{Finite fields, double Weil sums, boomerang uniformity, $c$-boomerang uniformity}
\subjclass[2010]{12E20, 11T24, 11T06, 94A60}
\maketitle 

\section{Introduction}
Let $\F_q$ be the finite field with $q=p^n$ elements, where $p$ is a prime and $n$ is a positive integer. The multiplicative cyclic group of nonzero elements of the finite field is denoted by $\F_q^* = \langle g \rangle$, where $g$ is a primitive element of $\F_q$. 
%The {\em canonical additive character} is a homomorphism $\chi_1 : \F_q \rightarrow \mathbb{C}$ of the additive group of $\F_q$ defined as follows
%$$\chi_1(x)=\exp\left(\frac{2\pi i\, {\rm Tr}(x)}{p} \right),$$
%where $\mathbb C$ is the field of complex numbers and ${\rm Tr} : \F_q \rightarrow \F_p$ is the absolute trace defined by $\Tr(x)=x+x^p+x^{p^2}+\cdots+x^{p^{n-1}}$  (to emphasize the dimension, we sometimes write this as $\Tr^n_1$). We define the relative trace $\Tr_e:\F_{p^n}\to\F_{p^e}, e|n$,  by $\Tr_e(x) =x+x^{p^e}+x^{p^{2e}}+\cdots+ x^{p^{e(\frac{n}{e}-1)}}$. 
%Note that all additive characters of $\F_q$ can be expressed in terms of $\chi_1$~\cite[Theorem 5.7]{LN97}.
%
%For each $0\leq k \leq q-2$, the $k$-th multiplicative character is a homomorphism $\psi_k : \F_q \rightarrow \mathbb{C}$ of the multiplicative group of $\F_q$ defined as follows
%$$\psi_k\left(g^\ell\right) =\exp\left(\frac{2\pi ik\ell}{q-1}\right) \quad \text{for}\; \ell =0, \ldots, q-2.$$
%It is well-known that the group of multiplicative characters of $\F_q$ is a cyclic group of order $q-1$ with identity element $\psi_0$~~\cite[Corollary 5.9]{LN97}.
%
%In the theory of finite fields, exponential sums  are important tools in the study of number of solutions of equations over finite fields. As a special case, the Gauss' sums  are defined as follows
%$$\displaystyle G(\psi,\chi)=\sum_{x\in\F_q^*} \psi(x)\chi(x),$$
%where $\chi$ and $\psi$ are additive and multiplicative characters of $\F_q$, respectively. A Weil sum is yet another important character sum   defined as follows
A Weil sum is an important character sum defined as follows

$$\sum_{x\in \F_q }\chi(F(x)),$$
where $\chi$ is an additive character of $\F_q$ and $F(x)$ is a polynomial in $\F_q[x]$. It is well-known that a polynomial $F(x)$ over finite field $\F_q$ is a permutation polynomial (PP) if and only if its Weil sum $\sum_{x\in \F_q }\chi(F(x))=0$ for all nontrivial additive characters $\chi$ of $\F_q.$ Permutation polynomials are a very important class of polynomials as they have applications in coding theory and cryptography, especially in the substitution boxes (S-boxes) of the block ciphers. The security of the S-boxes relies on certain properties of the function $F(x)$, e.g., its differential uniformity, boomerang uniformity, nonlinearity etc.

Recently, Cid et al.~\cite{cid} introduced a ``new tool" for analyzing the boomerang style attack proposed by Wagner~\cite{Wag}. This new tool is usually referred to as Boomerang Connectivity Table (BCT). Boura and Canteaut~\cite{BC} further studied BCT and coined the term boomerang uniformity, which is essentially the maximum value in the BCT. Li et al.~\cite{KL} provided new insights in the study of BCT and presented an equivalent technique to compute BCT, which does not require the compositional inverse of the permutation polynomial $F(x)$ at all. In fact, Li et al.~\cite{KL} also gave a characterization of BCT in terms of Walsh transform and gave a class of permutation polynomial with boomerang uniformity $4$.

Recently, St\u{a}nic\u {a} \cite{PSB} extended the notion of BCT and boomerang uniformity. In fact, he defined what he termed as $c$-BCT and $c$-boomerang uniformity for an arbitrary polynomial function $F$ over $\F_q$ and for any $c\neq0 \in \F_q$. Let $a,b \in \F_q$, then the entry of the $c$-Boomerang Connectivity Table ($c$-BCT) at $(a,b)\in \mathbb{F}_{p^n}\times \mathbb{F}_{p^n}$, denoted as $_c\cB_F(a,b)$, is the number of solutions in $\mathbb{F}_{p^n}\times \mathbb{F}_{p^n}$ of the following system
\begin{equation}\label{eqsys}
  \begin{cases} 
  F(x)-cF(y)=b \\
  F(x+a)-c^{-1}F(y+a)=b.
  \end{cases}
\end{equation} 

The $c$-boomerang uniformity of $F$ is defined as $$\beta_{F,c}= \max_{a,b \in \mathbb{F}_{p^n}^*}~  _c\cB_F(a,b).$$
In yet other recent papers, St\u{a}nic\u {a} ~ \cite{PSS,PSW} further studied the $c$-BCT for the swapped inverse function and also gave an elegant description of the $c$-BCT entries of the power map in terms of double Weil sums. He further simplified his expressions for the Gold function $x^{p^k+1}$ over $\F_{p^n}$, for all $1\leq k < n$ and $p$ odd. In this paper, we shall complement the work of~\cite{PSW} to the finite fields of even characteristic ($p=2$).

The paper is structured as follows. Section~\ref{S2} contains some preliminary results that will be used across the sections. Section~\ref{S3} contains the characterization of $c$-BCT entries in terms of double Weil sums. For $c=1$, we further simplify this expression in Section~\ref{S4}. In fact, Theorem~\ref{generalization} generalizes previously known results of Boura and Canteaut~\cite{BC}. In Section~\ref{S5}, we consider the case when $c \in \F_{2^e}\backslash \{0,1\}$, where $e=\gcd(k,n)$. In Section~\ref{S6}, we discuss the general case. Finally, in Section~\ref{S7}, we discuss the affine, extended affine and CCZ-equivalence as it relates to c-boomerang uniformity.

\section{Preliminaries}\label{S2}

We begin this section by first recalling the recent notion of $c$-differentials introduced in~\cite{CDU}.
We shall assume that $q=2^n$ for rest of the paper. For an $(n,n)$-function $F: \F_{q} \to \F_{q}$, and $c\in \F_{q}$, we define the ({\em multiplicative}) {\em $c$-derivative} of $F$ with respect to~$a \in \F_q$ to be the  function
\[
 _cD_{a}F(x) =  F(x + a)- cF(x), \mbox{ for  all }  x \in \F_q.
\]
Further, for $a,b\in\F_q$, we let the entries of the $c$-Difference Distribution Table ($c$-DDT) be defined by ${_c\Delta}_F(a,b)= \#{\{x\in\F_q : F(x+a)-cF(x)=b\}}$. We call the quantity
\[
\delta_{F,c}=\max\left\{{_c\Delta}_F(a,b)\,|\, a,b\in \F_q, \text{ and } a\neq 0 \text{ if $c=1$} \right\},\]
the {\em $c$-differential uniformity} of~$F$. Note that the case $c=1$ corresponds to the usual notion of differential uniformity. The interested reader may refer to \cite{BC1,HPRS,YMZ, SG20,SRT,ZH} for some recent results concerning $c$-differential uniformity. 

The following theorem is a ``binary" analogue of~\cite[Theorem 1]{PSW}, which gives a nice connection between $c$-BCT and $c$-DDT entries of the power map $x^d$ over $\F_{2^n}$. 
\begin{thm}
\label{even1}
Let $F(x)=x^d$ be a power function on $\F_q$, $q=2^n$ and $c\in\F_q^*$. Then, for fixed $b\in \F_{q}^*$, the $c$-Boomerang Connectivity Table entry  $_c\cB_F(1,b)$ at $(1,b)$ is given by 
{\small
\[
\frac{1}{q}\left(\sum_{w \in\F_q} ({_c\Delta}_F (w,b)+ {_{c^{-1}}\Delta}_F (w,b))\right)-1 +\frac1{q^2}\sum_{\alpha,\beta\in\F_q,\alpha\beta\neq 0} \chi_1(b(\alpha+\beta))\, S_{\alpha,\beta}\, S_{\alpha c, \beta c^{-1}},
\]
}
with
\begin{align*}
S_{\alpha,\beta}&=\sum_{x\in \F_q} \chi_1\left(\alpha x^d\right)\chi_1\left(\beta (x+1)^d\right)\\
&= \frac{1}{(q-1)^2} \sum_{j,k=0}^{q-2}   G(\bar\psi_j,\chi_1) G(\bar\psi_k,\chi_1) \sum_{x\in \F_q} \psi_1\left((\alpha x^d)^j (\beta (x+1)^d)^k\right),
\end{align*}
where $\chi_1$ is the canonical additive character of  the additive group of $\F_q$, $\psi_k$ is the $k$-th multiplicative character of the multiplicative group of $\F_q$ and $G(\psi,\chi)$ is the Gauss sum.
\end{thm}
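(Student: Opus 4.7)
My approach proceeds in three stages: (i) convert the counting problem to a character sum via additive-character orthogonality, (ii) peel off the ``degenerate'' slices $\alpha=0$ and $\beta=0$ and identify them with $c$-DDT sums, and (iii) rewrite the remaining Weil sums in multiplicative-character form via Gauss-sum inversion.

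First, orthogonality applied to the system \eqref{eqsys} with $a=1$ gives
\begin{equation*}
{}_c\cB_F(1,b)=\frac{1}{q^2}\sum_{\alpha,\beta\in\F_q}\sum_{x,y\in\F_q}\chi_1\!\bigl(\alpha(F(x)-cF(y)-b)+\beta(F(x+1)-c^{-1}F(y+1)-b)\bigr).
\end{equation*}
Separating the $x$- and $y$-summations, and noting that signs are trivial in characteristic $2$, this factors as
\begin{equation*}
{}_c\cB_F(1,b)=\frac{1}{q^2}\sum_{\alpha,\beta\in\F_q}\chi_1(b(\alpha+\beta))\,S_{\alpha,\beta}\,S_{\alpha c,\beta c^{-1}}.
\end{equation*}

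Next I would split the outer sum according to which of $\alpha,\beta$ are zero. The case $\alpha=\beta=0$ contributes $1$. For $\alpha=0,\ \beta\neq 0$, the substitution $u=x+1,\,v=y+1$ turns $S_{0,\beta}$ and $S_{0,\beta c^{-1}}$ into ordinary Weil sums of $F$, and then evaluating $\sum_{\beta\neq 0}\chi_1(\beta z)=q\,\mathbf{1}_{z=0}-1$ produces
\begin{equation*}
\frac{1}{q}\#\{(u,v)\in\F_q^2:F(u)-c^{-1}F(v)=b\}-1=\frac{1}{q}\sum_{w\in\F_q}{_{c^{-1}}\Delta}_F(w,b)-1,
\end{equation*}
the last equality by reparametrising $u=v+w$. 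The symmetric case $\beta=0,\ \alpha\neq 0$ yields the corresponding $c$-DDT sum. Combining all degenerate pieces gives the first line of the formula, and the leftover $\alpha\beta\neq 0$ block is exactly the double Weil sum in the statement.

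For the Gauss-sum form of $S_{\alpha,\beta}$, I would plug the inversion identity $\chi_1(z)=\frac{1}{q-1}\sum_{j=0}^{q-2}G(\bar\psi_j,\chi_1)\psi_j(z)$ (for $z\neq 0$) into each additive character in $S_{\alpha,\beta}$, interchange the $x$-sum with the $j,k$-sums, and use $\psi_j=\psi_1^j$ to collapse $\psi_j(\alpha x^d)\psi_k(\beta(x+1)^d)$ into $\psi_1\!\bigl((\alpha x^d)^j(\beta(x+1)^d)^k\bigr)$. The main obstacle will be the reparametrisations in step~(ii) that transform the $\alpha=0$ and $\beta=0$ slices into $c$-DDT sums, together with a small amount of bookkeeping at $x\in\{0,1\}$ when inverting the Gauss-sum identity in step~(iii); adopting $\psi_0(0)=1$ and $\psi_j(0)=0$ for $j\neq 0$ resolves the latter, while the remaining manipulations are routine orthogonality calculations.
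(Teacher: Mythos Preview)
The paper does not actually supply a proof of this theorem: it is stated in the Preliminaries as the ``binary'' analogue of \cite[Theorem~1]{PSW} and left unproved. The only part the paper derives explicitly is the full character-sum identity
\[
{}_c\cB_F(1,b)=\frac{1}{q^2}\sum_{\alpha,\beta\in\F_q}\chi_1(b(\alpha+\beta))\,S_{\alpha,\beta}\,S_{c\alpha,c^{-1}\beta},
\]
which appears at the start of Section~3 and is exactly your step~(i).

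Your proposal is correct and follows the standard route used in \cite{PSW}. Step~(ii) is carried out properly: the $\alpha=\beta=0$ term gives~$1$, each of the two half-degenerate slices collapses via $\sum_{\gamma\neq 0}\chi_1(\gamma z)=q\mathbf{1}_{z=0}-1$ and the reparametrisation $u=v+w$ to $\frac{1}{q}\sum_w{_{c^{\pm1}}\Delta}_F(w,b)-1$, and their sum with the $1$ yields the displayed first line. Step~(iii) via multiplicative Fourier inversion $\chi_1(z)=\frac{1}{q-1}\sum_j G(\bar\psi_j,\chi_1)\psi_j(z)$ for $z\neq 0$ is the right mechanism; your caveat about $x\in\{0,1\}$ is warranted, since the identity fails at $z=0$ (it returns $-1/(q-1)$ rather than $1$), so the stated Gauss-sum expression for $S_{\alpha,\beta}$ should be read with the usual convention on $\psi_j(0)$ and an implicit correction at those two points, exactly as in \cite{PSW}. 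In short, your outline is the intended argument, and nothing is missing beyond the routine bookkeeping you already flagged.
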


We shall now state some lemmas that will be used in the sequel. The following lemma is well-known and has been used in various contexts.

\begin{lem}\label{gcd}
Let $e=\gcd(k,n)$. Then 
 \begin{equation*}
  \gcd(2^k+1, 2^n-1)=
  \begin{cases}
   1 & ~\mbox{if}~ n/e~\mbox{is odd,}\\
   2^e+1 & ~\mbox{if}~ n/e~\mbox{is even.}
  \end{cases}
 \end{equation*}
\end{lem}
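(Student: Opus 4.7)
The plan is to reduce everything to the standard identity $\gcd(2^a-1, 2^b-1) = 2^{\gcd(a,b)}-1$ by introducing the factorization $2^{2k}-1 = (2^k-1)(2^k+1)$.

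First I would note that $2^k-1$ is odd, hence $\gcd(2^k-1, 2^k+1) = \gcd(2^k-1, 2) = 1$. Using the multiplicativity property $\gcd(AB,C) = \gcd(A,C)\,\gcd(B,C)$ whenever $\gcd(A,B)=1$, this gives
\begin{equation*}
\gcd(2^{2k}-1,\,2^n-1) \;=\; \gcd(2^k-1,\,2^n-1)\cdot \gcd(2^k+1,\,2^n-1).
\end{equation*}
On the other hand, the standard identity yields $\gcd(2^{2k}-1, 2^n-1) = 2^{\gcd(2k,n)}-1$ and $\gcd(2^k-1, 2^n-1) = 2^e-1$. So the only remaining task is to determine $\gcd(2k,n)$ as a function of the parity of $n/e$.

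Next I would do this case analysis directly. Write $k = e k'$ and $n = e m$ with $\gcd(k',m) = 1$. If $n/e = m$ is odd, then $\gcd(2k',m) = \gcd(k',m) = 1$, so $\gcd(2k,n) = e$; the factorization then reads $2^e - 1 = (2^e-1)\cdot \gcd(2^k+1,2^n-1)$, forcing $\gcd(2^k+1, 2^n-1) = 1$. If $m$ is even, then $k'$ must be odd (else $\gcd(k',m)$ would be even), so $\gcd(2k', m) = 2\gcd(k', m/2) = 2$ and hence $\gcd(2k,n) = 2e$. The factorization now reads $(2^e-1)(2^e+1) = (2^e-1)\cdot \gcd(2^k+1, 2^n-1)$, giving $\gcd(2^k+1, 2^n-1) = 2^e+1$.

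There is no real obstacle here; this is essentially a bookkeeping argument on exponents, and the only subtlety is keeping track of how the parity of $n/e$ propagates into $\gcd(2k,n)$. The key insight that makes the proof clean is the elementary observation $\gcd(2^k-1, 2^k+1)=1$ in characteristic $2$, which lets one split the gcd across the factorization $2^{2k}-1 = (2^k-1)(2^k+1)$.
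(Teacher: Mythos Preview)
Your argument is correct. The paper itself does not prove this lemma: it is introduced only as a ``well-known'' fact, without any argument or reference to a specific proof. Your approach---splitting $2^{2k}-1=(2^k-1)(2^k+1)$, invoking coprimality of the two factors, and reducing everything to the identity $\gcd(2^a-1,2^b-1)=2^{\gcd(a,b)}-1$---is exactly the standard proof of this result, and all steps (including the multiplicativity $\gcd(AB,C)=\gcd(A,C)\gcd(B,C)$ for $\gcd(A,B)=1$, and the parity bookkeeping showing $\gcd(2k,n)\in\{e,2e\}$) are valid. The phrase ``in characteristic $2$'' at the end is a slight misnomer, since this is purely an integer computation; the relevant point is simply that $2^k-1$ and $2^k+1$ are both odd and differ by~$2$.
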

We shall also use the following lemma, which appeared in \cite{RSC}, describing the number of roots in $\F_{2^n}$ of a linearized polynomial $u^{2^k}x^{2^{2k}}+ux$, where $u\in \F_{2^n}^*.$
\begin{lem} \cite[Theorem 3.1]{RSC} \label{pp}
Let $g$ be a primitive element of $\F_{2^n}$ and let $e=\gcd(n,k)$. For any $u\in \F_{2^n}^*$ consider the linearized polynomial $L_u(x)= u^{2^k}x^{2^{2k}}+ux$ over $\F_{2^n}.$ Then for the equation $L_u(x)=0$, the following are true:
\begin{enumerate}
\item[$(1)$] If $n/e$ is odd, then there are $2^e$ solutions to this equation for any choice of $u \in \F_{2^n}^*$;
\item[$(2)$] If $n/e$ is even and $u=g^{t(2^e+1)}$ for some $t$, then there are $2^{2e}$ solutions to the equation;
\item[$(3)$] If $n/e$ is even and $u\neq g^{t(2^e+1)}$ for any $t$, then $x=0$ is the only solution.
\end{enumerate}
\end{lem}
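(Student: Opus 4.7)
The plan is to handle the equation $L_u(x) = u^{2^k}x^{2^{2k}} + u x = 0$ by separating $x = 0$ (always a solution) from the nonzero solutions. For $x \neq 0$, dividing through by $u x$ and using that $+$ and $-$ agree in characteristic two reduces the equation to $u^{2^k - 1}\, x^{2^{2k}-1} = 1$. Factoring $2^{2k} - 1 = (2^k - 1)(2^k + 1)$, this rewrites as $\bigl(u \cdot x^{2^k + 1}\bigr)^{2^k - 1} = 1$. Setting $y := u x^{2^k + 1}$, the nonzero solutions are therefore in bijection with pairs $(y, x) \in \F_{2^n}^* \times \F_{2^n}^*$ satisfying $y^{2^k - 1} = 1$ and $x^{2^k + 1} = y/u$.

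The condition $y^{2^k - 1} = 1$ cuts out the unique subgroup of $\F_{2^n}^*$ of order $\gcd(2^k - 1, 2^n - 1) = 2^e - 1$, which is exactly $\F_{2^e}^*$. Counting nonzero solutions thus reduces to counting, for each $y \in \F_{2^e}^*$, the preimages of $y/u$ under the endomorphism $x \mapsto x^{2^k + 1}$ of $\F_{2^n}^*$. Each nonempty fiber of this endomorphism has size $\gcd(2^k + 1, 2^n - 1)$, and Lemma \ref{gcd} evaluates this gcd according to the parity of $n/e$.

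When $n/e$ is odd, Lemma \ref{gcd} gives $\gcd = 1$, so $x \mapsto x^{2^k+1}$ is a bijection on $\F_{2^n}^*$ and every $y \in \F_{2^e}^*$ contributes exactly one $x$, yielding $2^e - 1$ nonzero solutions and hence $2^e$ in total, giving (1). When $n/e$ is even, $\gcd = 2^e + 1$, and the image of $x \mapsto x^{2^k+1}$ is the unique subgroup of $(2^e+1)$-th powers in $\F_{2^n}^*$, of order $(2^n-1)/(2^e+1)$. The key observation is that, since $\gcd(2^e - 1,\, 2^e + 1) = \gcd(2^e - 1, 2) = 1$, the order $2^e - 1$ of $\F_{2^e}^*$ divides $(2^n-1)/(2^e+1)$, so $\F_{2^e}^*$ is automatically contained in the subgroup of $(2^e+1)$-th powers. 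Consequently, for any $y \in \F_{2^e}^*$, the element $y/u$ is a $(2^e+1)$-th power if and only if $u$ is, i.e., precisely when $u = g^{t(2^e+1)}$ for some $t$. In that situation each $y$ contributes $2^e + 1$ preimages, giving $(2^e - 1)(2^e + 1) = 2^{2e} - 1$ nonzero solutions and $2^{2e}$ total, proving (2); otherwise no $y$ contributes and only $x = 0$ remains, proving (3).

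The main technical point is the embedding $\F_{2^e}^* \subseteq \{\,(2^e+1)\text{-th powers in }\F_{2^n}^*\,\}$ in the even case, which follows cleanly from $\gcd(2^e - 1, 2^e + 1) = 1$ inside the cyclic group $\F_{2^n}^*$. Everything else is routine bookkeeping with Lemma \ref{gcd} and the subgroup structure of the cyclic group $\F_{2^n}^*$.
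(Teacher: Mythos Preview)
Your argument is correct. The paper itself does not give a proof of this lemma; it is simply quoted from Coulter~\cite[Theorem~3.1]{RSC}. So there is no ``paper's own proof'' to compare against here---your write-up supplies a clean, self-contained justification where the paper just cites the result.

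For completeness, the one point worth stating slightly more explicitly is the identification of the image of $x\mapsto x^{2^k+1}$ with the subgroup of $(2^e+1)$-th powers in the even case: in the cyclic group $\F_{2^n}^*$ of order $N=2^n-1$, the image of $x\mapsto x^d$ is the unique subgroup of index $\gcd(d,N)$, and since $\gcd(2^k+1,N)=2^e+1=\gcd(2^e+1,N)$ (the latter because $2^e+1\mid 2^{2e}-1\mid 2^n-1$ when $n/e$ is even), these two images coincide. You use this implicitly, and it is correct; making it explicit removes any possible doubt. Everything else---the reduction to $(u x^{2^k+1})^{2^k-1}=1$, the identification of $\{y:y^{2^k-1}=1\}$ with $\F_{2^e}^*$ via $\gcd(2^k-1,2^n-1)=2^e-1$, and the containment $\F_{2^e}^*\subseteq\{(2^e+1)\text{-th powers}\}$ from $\gcd(2^e-1,2^e+1)=1$---is exactly right.
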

The explicit expression for the Weil sum of the form $\sum_{x\in \F_{2^n}}\chi_1(ux^{2^k+1}+vx)$, where $u,v \in \F_{2^n}$, is obtained in \cite{RSC}. In what follows, we shall denote the Weil sum $\sum_{x\in \F_q}\chi(ux^{2^k+1}+vx)$ by $\mathfrak{S}(u,v)$. The following lemma gives the explicit expression for $\mathfrak{S}(u,0).$
\begin{lem}\cite{RSC} \label{zero}
 Let $\chi$ be any nontrivial additive character of $\F_{q}$ and $g$ be the primitive element of the cyclic group $\F_q^*$. The following hold:
 \begin{enumerate}
  \item[$(1)$] If $n/e$ is odd, then 
  \begin{equation*}
   \sum_{x\in \F_{q}} \chi(ux^{2^k+1}) =
   \begin{cases}
    q &~\mbox{if}~ u=0,\\
    0 &~\mbox{otherwise.}
   \end{cases}
  \end{equation*}
  \item[$(2)$] Let $n/e$ be even so that $n=2m$ for some integer $m$. Then
  \begin{equation*}
   \displaystyle \sum_{x\in \F_{q}} \chi(ux^{2^k+1}) =
   \begin{cases}
  \displaystyle  (-1)^{m/e}2^m &~\mbox{if}~ u\neq g^{t(2^e+1)}~\mbox{for any integer}~t,\\
    \displaystyle (-1)^{\frac{m}{e}+1}2^{m+e} &~\mbox{if}~ u= g^{t(2^e+1)}~\mbox{for some integer}~t.
   \end{cases}
  \end{equation*}  
 \end{enumerate}
\end{lem}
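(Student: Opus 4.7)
For case~(1), where $n/e$ is odd, Lemma~\ref{gcd} gives $\gcd(2^k+1, 2^n-1) = 1$, so $x \mapsto x^{2^k+1}$ is a bijection of $\F_q$; the sum then reduces to $\sum_y \chi(uy)$, which equals $q$ if $u=0$ and $0$ otherwise by orthogonality of additive characters.

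For case~(2), assume $u \neq 0$. The plan is to compute $|\mathfrak{S}(u,0)|^2$ and then pin down the sign. Substituting $y = x+t$ in the square, expanding $(x+t)^{2^k+1}$ in characteristic~$2$, and using the Frobenius-invariance of the trace to rewrite $\chi(z^{2^k}) = \chi(z)$, the inner $x$-sum collapses to $q$ exactly when $t$ satisfies the linearized equation $L_u(t) = u^{2^k}t^{2^{2k}}+ut = 0$ from Lemma~\ref{pp}. Thus $|\mathfrak{S}(u,0)|^2 = q \sum_{t \in V_u} \chi(ut^{2^k+1})$, where $V_u := \ker L_u$. When $u \neq g^{t(2^e+1)}$, part~(3) of Lemma~\ref{pp} gives $V_u = \{0\}$, hence $|\mathfrak{S}(u,0)|^2 = q$ and $\mathfrak{S}(u,0) = \pm 2^m$. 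When $u = g^{t(2^e+1)}$, part~(2) gives $|V_u| = 2^{2e}$, and the sum over $V_u$ must still be evaluated.

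To handle that subcase, observe that $n/e$ even together with $\gcd(k/e, n/e) = 1$ forces $k/e$ odd, whence $\gcd(2k,n) = 2e$; so $\lambda^{2^{2k}} = \lambda$ for every $\lambda \in \F_{2^{2e}}$, making $V_u$ an $\F_{2^{2e}}$-line $V_u = t_0\F_{2^{2e}}$ for any $t_0 \in V_u\setminus\{0\}$. A direct manipulation of $L_u(t_0) = 0$ yields $(ut_0^{2^k+1})^{2^k} = ut_0^{2^k+1}$, so $c_0 := ut_0^{2^k+1}$ lies in $\F_{2^e}$. Since $\lambda^{2^k} = \lambda^{2^e}$ on $\F_{2^{2e}}$, the sum rewrites as $\sum_{\lambda \in \F_{2^{2e}}}\chi(c_0 \lambda^{2^e+1})$, where $\lambda\mapsto\lambda^{2^e+1}$ is the $(2^e+1)$-to-$1$ norm onto $\F_{2^e}^*$. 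Finally, because $n/e$ is even, $\Tr_{\F_{2^n}/\F_2}$ restricted to $\F_{2^e}$ equals $(n/e)\Tr_{\F_{2^e}/\F_2} \equiv 0$, so $\chi$ is trivial on $\F_{2^e}$ and the sum equals $1 + (2^e+1)(2^e-1) = 2^{2e}$. Thus $|\mathfrak{S}(u,0)|^2 = q \cdot 2^{2e}$ and $\mathfrak{S}(u,0) = \pm 2^{m+e}$.

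For the signs, swapping orders of summation yields $\sum_{u \in \F_q}\mathfrak{S}(u,0) = q$ (only $x=0$ contributes), so $\sum_{u \in \F_q^*}\mathfrak{S}(u,0) = 0$. Because $\mathfrak{S}(u,0)$ is constant on the $2^e+1$ cosets of $H := (\F_q^*)^{2^k+1} = \{g^{t(2^e+1)}\}$, with $H$ contributing $\pm 2^{m+e}$ and each of the other $2^e$ cosets contributing $\pm 2^m$, the vanishing constraint forces the off-$H$ signs to all agree and to be opposite to the sign on $H$. The main obstacle---fixing the sign on $H$ to be $(-1)^{m/e+1}$---I would handle by viewing $Q(x) := \Tr_{\F_{2^n}/\F_2}(ux^{2^k+1})$ as an $\F_2$-quadratic form on $\F_q\cong\F_2^n$ whose polar form has radical exactly $V_u$ and which vanishes on $V_u$ (by the same $\F_{2^e}$-trace argument as above); then $\mathfrak{S}(u,0)$ is the Gauss sum of the resulting non-degenerate quadratic form on $\F_q/V_u$, and its Arf invariant yields the claimed sign. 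Alternatively, induction on $n/e$, seeded by the base case $n = 2e$, $k = e$ (a short direct calculation), recovers the same parity.
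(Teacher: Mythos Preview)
The paper does not give a proof of this lemma; it is quoted from Coulter~\cite{RSC} without argument. Your sketch follows the standard route (and essentially Coulter's own): square the sum, use Frobenius-invariance of the trace to linearize the cross-term, reduce to $\ker L_u$ via Lemma~\ref{pp}, and then analyse the kernel as an $\F_{2^{2e}}$-line on which $ut^{2^k+1}$ lands in $\F_{2^e}$. The magnitude computation is correct in every detail.

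Two remarks. First, the identity $\chi(z^{2^k})=\chi(z)$ that you invoke holds only for the canonical character~$\chi_1$; for a general nontrivial character $\chi=\chi_a$ the relevant kernel becomes $\ker L_{au}$ rather than $\ker L_u$, so the coset condition on $u$ shifts by~$a$. This does not matter downstream, since the paper only ever applies the lemma with~$\chi_1$, but strictly speaking your argument proves the statement for~$\chi_1$ only. Second, the sign is the genuinely delicate part and you have only sketched it: the constraint $\sum_{u\neq 0}\mathfrak S(u,0)=0$ does force all off-$H$ signs to agree and to oppose the $H$-sign, as you say, but fixing the $H$-sign to $(-1)^{m/e+1}$ via the Arf invariant or an induction on $n/e$ is a real computation that you have not actually carried out. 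If you want a self-contained treatment, Coulter's inductive argument (lifting from $\F_{2^n}$ to $\F_{2^{n+2e}}$) is the cleanest way to close that last gap.
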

From Lemma \ref{gcd}, it is easy to see that when $n/e$ is odd, the power map $x^{2^k+1}$ permutes~$\F_{2^n}$. Therefore if $u \neq0$, there exists a unique element $\gamma \in \F_{q}^*$ such that $\gamma^{2^k+1}=u$ and hence
\begin{align*}
   \mathfrak{S}(u,v) 
   &=\sum_{x\in \F_q}\chi(ux^{2^k+1}+vx)\\
   &=\sum_{x\in \F_q}\chi(x^{2^k+1}+v\gamma^{-1}x)\\
   &= \mathfrak{S}(1, v\gamma^{-1}).
\end{align*}
The following lemma gives the expression for the Weil sum $\mathfrak{S}(1,v)$ for $v\neq0$ and $n/e$ odd.
\begin{lem}\cite[Theorem 4.2]{RSC} \label{odd}
Let $v\neq0$ and $n/e$ is odd. Then
\begin{equation*}
\mathfrak{S}(1,v) =
\begin{cases}
0 &~\mbox{if}~ {\rm Tr}_e(v) \neq 1,\\
\displaystyle \left(\frac{2}{n/e}\right)^e\, 2^{\frac{n+e}{2}} &~\mbox{if}~ {\rm Tr}_e(v) = 1,
\end{cases}
\end{equation*} 
where $\displaystyle \left(\frac{2}{n/e}\right)$ is the Jacobi symbol.
\end{lem}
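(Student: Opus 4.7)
The strategy is the standard squaring trick for quadratic-type Weil sums in characteristic two, followed by a separate (and more delicate) sign computation.

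First, observe that $\mathfrak{S}(1,v)$ is real since $\chi_1$ takes values in $\{\pm 1\}$, so
\[
\mathfrak{S}(1,v)^2 = \sum_{x,y\in\F_q}\chi_1\bigl(x^{2^k+1}+y^{2^k+1}+v(x+y)\bigr).
\]
Substituting $y=x+z$ and using the characteristic-two identity
\[
x^{2^k+1}+(x+z)^{2^k+1} = x z^{2^k}+x^{2^k}z+z^{2^k+1},
\]
the sum factors as
\[
\mathfrak{S}(1,v)^2 = \sum_{z\in\F_q}\chi_1(z^{2^k+1}+vz)\sum_{x\in\F_q}\chi_1\bigl(x z^{2^k}+x^{2^k}z\bigr).
\]
Since $\chi_1(a^{2^k})=\chi_1(a)$, the inner $x$-sum equals $\sum_x\chi_1\bigl(x(z^{2^k}+z^{2^{-k}})\bigr)$, which vanishes unless $z^{2^{2k}}=z$. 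Writing $n=en'$ and $k=ek'$ with $\gcd(k',n')=1$, the hypothesis that $n'=n/e$ is odd forces $\gcd(2k,n)=e$, so the surviving $z$'s are exactly those in $\F_{2^e}$.

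Second, for $z\in\F_{2^e}$ we have $z^{2^k}=z$ (as $e\mid k$), hence $z^{2^k+1}=z^2$, giving
\[
\mathfrak{S}(1,v)^2 = q\sum_{z\in\F_{2^e}}\chi_1(z^2+vz).
\]
By transitivity of the trace and the parity of $n/e$, for $z\in\F_{2^e}$ one checks that $\mathrm{Tr}_n(z^2+vz)\equiv \mathrm{Tr}_{e/1}\!\left(z\bigl(1+\mathrm{Tr}_e(v)\bigr)\right)\pmod 2$, where $\mathrm{Tr}_e\colon\F_{2^n}\to\F_{2^e}$ denotes the relative trace (as used in the lemma) and $\mathrm{Tr}_{e/1}$ is the absolute trace of $\F_{2^e}$. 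Hence the inner sum is $2^e$ if $\mathrm{Tr}_e(v)=1$ and $0$ otherwise. This proves the vanishing claim and yields $|\mathfrak{S}(1,v)|=2^{(n+e)/2}$ in the nontrivial case (note $n+e$ is even because $n/e$ is odd).

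The main obstacle is extracting the correct sign $\left(\frac{2}{n/e}\right)^e$, which the squaring argument cannot see. One way is to view $Q(x)=\mathrm{Tr}_n(x^{2^k+1}+vx)$ as a quadratic form on $\F_{2^n}$ over $\F_2$, identify its radical (which coincides with $\F_{2^e}$ by the computation above), pass to the induced non-degenerate quadratic form on $\F_{2^n}/\F_{2^e}$, and read off the hyperbolic versus elliptic type from its Arf invariant. An equivalent arithmetic route is to first reduce to the subfield case $e=1$ by descent along $\F_{2^e}$, and then identify the resulting Weil sum with a classical quadratic Gauss sum over $\F_{2^n}$ whose sign is given by Gauss--Stickelberger and equal to the Jacobi symbol $\left(\frac{2}{n/e}\right)$; the exponent $e$ then records the descent. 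This sign determination is where all the genuine arithmetic content lies, while the modulus is a direct consequence of the character-sum manipulations above.
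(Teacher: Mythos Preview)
The paper does not prove this lemma at all: it is simply quoted from Coulter~\cite[Theorem 4.2]{RSC} as a preliminary result, so there is no in-paper proof to compare your attempt against.

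On the substance of your argument: the squaring step and the reduction of the inner $x$-sum to the condition $z\in\F_{2^e}$ are correct, and your evaluation of $\sum_{z\in\F_{2^e}}\chi_1(z^2+vz)$ via transitivity of trace (using that $n/e$ is odd so that $\Tr_e(z)=z$ for $z\in\F_{2^e}$) is clean. This fully settles the vanishing case and the modulus $2^{(n+e)/2}$.

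The gap is exactly where you flag it: the sign $\left(\frac{2}{n/e}\right)^e$ is not actually derived. You offer two possible routes (Arf invariant of the induced form on $\F_{2^n}/\F_{2^e}$, or descent to $e=1$ followed by a Gauss--Stickelberger identification), but neither is carried out, and both hide nontrivial work. In particular, the descent-to-$e=1$ route requires you to show that $\mathfrak S(1,v)$ factors through, or is controlled by, a Weil sum over a smaller field in a way that produces the $e$th power of a base sign; and the Arf-invariant route requires computing the invariant of a specific quadratic form, which is precisely the content of Coulter's original argument. As written, what you have is a complete proof of the absolute value together with a plausible but unexecuted plan for the sign, so the proposal is incomplete rather than wrong.
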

In the case when $u,v\neq0$ and $n/e$ is even, the Weil sum $\mathfrak{S}(u,v)$ depends whether or not the linearized polynomial $L_u(x)=u^{2^k}x^{2^{2k}}+ux$ is a permutation of $\F_{2^n}.$ The following lemma gives the expression for Weil sum $\mathfrak{S}(u,v)$ for $u,v\neq0$ and $n/e$ even.

\begin{lem} \cite[Theorem 5.3]{RSC}\label{even}
 Let $u,v \in \F_{q}^*$ and $n/e$ is even so $n=2m$ for some integer $m$. Then
 \begin{enumerate}
  \item[$(1)$] If $u\neq g^{t(2^e+1)}$ for any integer $t$ then $L_u$ is a PP. Let $x_u \in \F_q$ be the unique solution of the equation $L_u(x)=v^{2^k}$. Then 
  $$\displaystyle \mathfrak{S}(u,v)= (-1)^{m/e}2^m \chi_1(u{x_u}^{2^k+1}).$$
  \item[$(2)$] If $u= g^{t(2^e+1)}$ for some integer $t$, then $\mathfrak{S}(u,v)=0$ unless the equation $L_u(x)=v^{2^k}$ is solvable. If the equation $L_u(x)=v^{2^k}$ is solvable with some solution, say $x_u$, then 
  \begin{equation*}
   \mathfrak{S}(u,v)=
   \begin{cases}
    \displaystyle (-1)^{m/e}2^m \chi_1(u{x_u}^{2^k+1}) &~\mbox{if}~ {\rm Tr}_e(u)\neq0,\\
    \displaystyle (-1)^{\frac{m}{e}+1}2^{m+e} \chi_1(u{x_u}^{2^k+1}) &~\mbox{if}~ {\rm Tr}_e(u)= 0. \\
   \end{cases}
  \end{equation*}
 \end{enumerate}
\end{lem}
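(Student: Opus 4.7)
The plan is to follow the classical ``square and complete the square'' strategy. Squaring gives
\[
|\mathfrak{S}(u,v)|^2 = \sum_{x,y\in\F_q}\chi_1\bigl(u(x^{2^k+1}+y^{2^k+1})+v(x+y)\bigr);
\]
substituting $y=x+z$ and using the characteristic-$2$ identity $(x+z)^{2^k+1}+x^{2^k+1} = zx^{2^k}+z^{2^k}x+z^{2^k+1}$, the inner sum over $x$ is $q$ or $0$ by orthogonality of $\chi_1$, with the non-vanishing condition being $(uz)^{2^{n-k}}+uz^{2^k}=0$, i.e.\ $L_u(z)=0$ after taking $2^k$-th powers. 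Hence
\[
|\mathfrak{S}(u,v)|^2 = q\sum_{z\in\ker L_u}\chi_1\bigl(uz^{2^k+1}+vz\bigr).
\]
The idea is then to recover $\mathfrak{S}(u,v)$ itself by a linear substitution $x = y+x_u$ that kills the cross terms and leaves a factor $\mathfrak{S}(u,0)$, which is already given by Lemma \ref{zero}.

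For case (1), Lemma \ref{pp}(3) yields $\ker L_u=\{0\}$, so $|\mathfrak{S}(u,v)|=2^m$, and $L_u$ being a bijection produces a unique $x_u$ with $L_u(x_u)=v^{2^k}$. Expanding $u(y+x_u)^{2^k+1}+v(y+x_u)$ and using $\Tr(\alpha^{2^k})=\Tr(\alpha)$, the linear-in-$y$ cross term has trace-coefficient $C=(ux_u)^{2^{n-k}}+ux_u^{2^k}+v$ whose $2^k$-th power equals $L_u(x_u)+v^{2^k}=0$; thus $C=0$ and the cross term drops. A short manipulation -- multiply $L_u(x_u)=v^{2^k}$ by $x_u^{2^k}$ and take trace, using $\Tr(\beta^{2^k}+\beta)=0$ -- gives $\Tr(vx_u)=0$. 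Altogether $\mathfrak{S}(u,v)=\chi_1(ux_u^{2^k+1})\,\mathfrak{S}(u,0)$, and Lemma \ref{zero}(2) in the non-$(2^e+1)$-power branch concludes case (1).

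Case (2) uses the same template, with additional care around solvability. When $L_u(x)=v^{2^k}$ has no solution, an orthogonality argument over cosets of $\ker L_u$ (together with the observation $uz^{2^k+1}\in\F_{2^e}$ for every $z\in\ker L_u$, obtained by multiplying $L_u(z)=0$ by $z^{2^k}$) forces $\mathfrak{S}(u,v)=0$. When a solution $x_u$ exists, the completing-the-square identity $\mathfrak{S}(u,v)=\chi_1(ux_u^{2^k+1})\,\mathfrak{S}(u,0)$ still applies, and independence from the choice of $x_u$ follows from the same inclusion combined with the vanishing of $\Tr_{\F_q/\F_2}$ on $\F_{2^e}$ in the regime where $n/e$ is even.

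The main obstacle is the refined subcase split according to $\Tr_e(u)\neq 0$ versus $\Tr_e(u)=0$. This requires analyzing how the quadratic form $z\mapsto\Tr(uz^{2^k+1})$ behaves on $\ker L_u$: its ``type'' is exactly what distinguishes the two branches of Lemma \ref{zero}(2), and the invariant $\Tr_e(u)$ is precisely what controls this type. Tracking this refinement through the completing-the-square identity then yields the asserted dichotomy of magnitudes $2^m$ versus $2^{m+e}$ and signs $(-1)^{m/e}$ versus $(-1)^{m/e+1}$.
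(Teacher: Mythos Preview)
The paper does not itself prove this lemma; it is quoted from Coulter~\cite[Theorem~5.3]{RSC}, so there is no in-paper argument to compare your attempt against. Your squaring-and-completing-the-square strategy is the standard one (and is essentially Coulter's own), and your execution of case~(1) and of the unsolvable branch of case~(2) is correct.

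Your final paragraph, however, misidentifies what remains. The completing-the-square identity you have already derived settles the solvable branch of case~(2) completely: once $L_u(x_u)=v^{2^k}$ you have $\mathfrak{S}(u,v)=\chi_1\big(ux_u^{2^k+1}\big)\,\mathfrak{S}(u,0)$, and under the hypothesis $u=g^{t(2^e+1)}$ Lemma~\ref{zero}(2) returns the \emph{single} value $\mathfrak{S}(u,0)=(-1)^{m/e+1}2^{m+e}$, with no $\Tr_e(u)$ dependence whatsoever. The ``two branches of Lemma~\ref{zero}(2)'' you invoke are the branches $u\neq g^{t(2^e+1)}$ versus $u=g^{t(2^e+1)}$, which correspond exactly to cases~(1) and~(2) of the present lemma, not to any further split inside case~(2). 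A direct check in $\F_{16}$ (primitive polynomial $y^4+y+1$) with $k=1$, $u=g^3$ (so $\Tr_e(u)=1\neq0$), $v=g$, $x_u=1$ gives $\mathfrak{S}(u,v)=8=(-1)^{m/e+1}2^{m+e}\chi_1\big(ux_u^{2^k+1}\big)$, whereas the printed $\Tr_e(u)\neq0$ branch would predict $-4$. Thus the ``main obstacle'' you flag is not a gap in your argument but an inconsistency in the statement as transcribed here; your identity together with Lemma~\ref{zero}(2) is already the complete proof.
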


\section{The binary Gold function}
\label{S3}
In this section, we shall give the explicit expression for the $c$-BCT entries of the Gold function $x^{2^k+1}$ over $\F_{2^n}$, for all $c\neq0$. Recall that the $c$-boomerang uniformity of a power function $F(x)=x^d$ over $\F_{2^n}$ is given by $\displaystyle \max_{b \in \mathbb{F}_{2^n}^*}~ _c\cB_F(1,b) $, where $_c\cB_F(1,b)$ is the number of solutions in $\mathbb{F}_{q}\times \mathbb{F}_{q}$, $q=2^n$ of the following system
\begin{equation}
\label{eq:eq3.1}
\begin{cases}
x^d+cy^d=b \\
(x+1)^d+c^{-1}(y+1)^d=b.
\end{cases}
\end{equation}
As done in~\cite{PSW},  for $b \neq 0$ and fixed $c\neq 0$, the number of solutions  $(x,y)\in\F_q^2$ of the system ~\eqref{eq:eq3.1} is given by
\allowdisplaybreaks
\begin{align*}
{_c\cB_F}(1,b)
&=\frac{1}{q^2}\sum_{x,y\in \F_q}\sum_{\alpha \in\F_q} \chi_1\left(\alpha\left(x^d+cy^d+b \right) \right) \sum_{\beta\in\F_q}\chi_1\left(\beta\left((x+1)^d+c^{-1} (y+1)^d+b \right) \right)\\
&=\frac{1}{q^2}\sum_{\alpha,\beta \in \F_q}\chi_1\left(b\left(\alpha+\beta \right) \right) \sum_{x \in\F_q} \chi_1\left(\alpha x^d+\beta(x+1)^d \right) \sum_{y \in\F_q} \chi_1\left(c\alpha y^d+c^{-1}\beta (y+1)^d \right)\\
&=\frac{1}{q^2}\sum_{\alpha,\beta \in \F_q}\chi_1\left(b\left(\alpha+\beta \right) \right) S_{\alpha, \beta} S_{c\alpha,  c^{-1}\beta},
\end{align*}
where $S_{\alpha,\beta} =\sum_{x\in \F_q} \chi_1\left(\alpha x^d+\beta (x+1)^d\right)$. Therefore, the problem of computing the $c$-BCT entry ${_c\cB_F}(1,b)$ is reduced to the computation of the product of the Weil sums $S_{\alpha, \beta}$ and $S_{c\alpha, c^{-1}\beta}.$ Now, in the particular case when $d=2^k+1$, i.e., for the Gold case, we shall further simplify the expression for $S_{\alpha, \beta}$ as follows:
\allowdisplaybreaks
\begin{align*}
S_{\alpha,\beta}&=\sum_{x\in \F_q} \chi_1\left(\alpha x^{2^k+1}+\beta (x+1)^{2^k+1} \right)\\
&= \chi_1(\beta)\sum_{x\in \F_q} \chi_1((\alpha+\beta) x^{2^k+1})~ \chi_1 (\beta x^{2^k}+\beta x) \\
&= \chi_1(\beta)\sum_{x\in \F_q} \chi_1((\alpha+\beta) x^{2^k+1})~ \chi_1 ((\beta^{2^{n-k}}x)^{2^k}+\beta x) \\
&= \chi_1(\beta)\sum_{x\in \F_q} \chi_1((\alpha+\beta) x^{2^k+1})~ \chi_1 ((\beta^{2^{n-k}}+\beta)x) \\
&= \chi_1(\beta)\sum_{x\in \F_q} \chi_1((\alpha+\beta) x^{2^k+1}+(\beta^{2^{n-k}}+\beta)x) \\
&= \chi_1(\beta)\sum_{x\in \F_q} \chi_1(A x^{2^k+1}+Bx),
\end{align*}
where $A=\alpha+\beta$ and $B=\beta^{2^{n-k}}+\beta$. Here one may note that $A=0$ if and only if $\alpha=\beta$. Also, $B=0$ if and only if $\beta \in \F_{2^e}$, since 
\begin{align*}
 B = 0 
 &\Leftrightarrow \beta^{2^{n-k}}=\beta\\
 &\Leftrightarrow \beta^{2^{n-k}-1}=1\\
 &\Leftrightarrow \beta^{2^{\gcd(n-k,n)}-1}=1\\
 &\Leftrightarrow \beta^{2^{e}-1}=1,~(\mbox{as}~ \gcd(n-k,n)=e)\\
 &\Leftrightarrow \beta \in \F_{2^e}.
\end{align*}
Now we shall calculate $S_{\alpha, \beta}$ in two cases, namely, $n/e$ odd and $n/e$ even, respectively.\\
\textbf{Case 1}: $n/e$ is odd.\\ 
In this case, if $\alpha=\beta$ and $\beta \in \F_{2^e}$, then $S_{\alpha, \beta}= q\chi_1(\beta)$. If $\alpha=\beta$ and $\beta \in \F_{2^n} \backslash \F_{2^e}$ then $S_{\alpha, \beta}= 0$. In the event of $\alpha \neq \beta$ and $\beta \in \F_{2^e}$, again we have $S_{\alpha, \beta}= 0$. Finally, if $\alpha \neq \beta$ and $\beta \in \F_{2^n} \backslash \F_{2^e}$, by Lemma~\ref{odd} we have,
\begin{equation*}
S_{\alpha, \beta}=
\begin{cases}
0 &~\mbox{if}~ {\rm Tr}_e(B\gamma^{-1})\neq 1, \\
\left(\frac{2}{n/e}\right)^{e} 2^{\frac{n+e}{2}} \chi_1(\beta) &~\mbox{if}~ {\rm Tr}_e(B\gamma^{-1})= 1,
\end{cases}
\end{equation*}
where $\gamma \in \F_{q}$ is the unique element such that $\gamma^{2^k+1}= A$. 

\noindent
\textbf{Case 2}: $n/e$ is even.\\
Let $n=2m$, for some positive integer $m$ and $g$ be a primitive element of the finite field $\F_q$. When $\alpha=\beta$ and $\beta \in \F_{2^e}$ then $S_{\alpha, \beta}= q\chi_1(\beta)$. If $\alpha=\beta$ and $\beta \in \F_{2^n} \backslash \F_{2^e}$ then again $S_{\alpha, \beta}= 0$. In the event of $\alpha \neq \beta$ and $\beta \in \F_{2^e}$, by Lemma~\ref{zero} we have 
\begin{equation*}
S_{\alpha, \beta}=
\begin{cases}
(-1)^{m/e} 2^m \chi_1(\beta) &~\mbox{if}~ A \neq g^{t(2^e+1)}~\mbox{for any integer}~ t, \\
(-1)^{\frac{m}{e}+1} 2^{m+e} \chi_1(\beta) &~\mbox{if}~ A = g^{t(2^e+1)}~\mbox{for some integer}~ t.
\end{cases}
\end{equation*}
Finally, when $\alpha \neq \beta$ and $\beta \in \F_{2^n} \backslash \F_{2^e}$, we shall consider two cases depending on whether or not the linearized polynomial $L_{A}(x)= A^{2^k}x^{2^{2k}}+Ax$ is a permutation polynomial. From Lemma~\ref{pp}, $L_{A}$ is a permutation polynomial if and only if $n/e$ is even and $A \neq g^{t(2^e+1)}$ for any integer $t$. Therefore, when $n/e$ is even and $A \neq g^{t(2^e+1)}$ for any integer $t$, the equation $L_{A}(x)= B^{2^k}$ will have a unique solution, say $x_{A}$. Therefore, by Lemma~\ref{even}, we have 
$$\displaystyle
S_{\alpha, \beta}= (-1)^{m/e} 2^m \chi_1(\beta) \chi_1(Ax_{A}^{2^k+1}).$$
Now if the linearized polynomial $L_{A}$ is not permutation, i.e, $n/e$ is even and $A = g^{t(2^e+1)}$ for some integer $t$, we again have two cases depending on whether or not the equation $L_{A}(x)= B^{2^k}$ is solvable. In the case when equation $L_{A}(x)= B^{2^k}$ is solvable, let $x_{A}$ be one of its solution. Therefore, by Lemma~\ref{even} we have,
\begin{equation*}
S_{\alpha, \beta}=
\begin{cases}
(-1)^{\frac{m}{e}+1} 2^{m+e} \chi_1(\beta) \chi_1\left(Ax_{A}^{2^k+1}\right) &~\mbox{if}~ {\rm Tr}_e(A)=0, \\
(-1)^{\frac{m}{e}} 2^{m} \chi_1(\beta) \chi_1\left(Ax_{A}^{2^k+1}\right) &~\mbox{if}~ {\rm Tr}_e(A)\neq 0.
\end{cases}
\end{equation*}
If $L_{A}(x)= B^{2^k}$ is not solvable, again, by Lemma~\ref{even}, $S_{\alpha, \beta}=0$. \\
Thus we have  computed $S_{\alpha, \beta}$ in all possible cases. Similarly, we can find $S_{c\alpha, c^{-1}\beta}$ by putting $c\alpha $ and $c^{-1} \beta$ in place of $\alpha$ and $\beta$, respectively. We shall now explicitly compute the $c$-BCT entry $_cB_F(1,b)$ for $c=1$, $c \in \F_{2^e} \backslash \{0,1\}$ and $c \in \F_{2^n}\backslash \F_{2^e}$ in the forthcoming sections.

\section{The case $c=1$} \label{S4}

When $c=1$, $S_{\alpha,\beta}$ and $S_{c\alpha, c^{-1}\beta}$ coincide, therefore for any fixed $b\neq0$, the $c$-BCT entry is given by,
\[_1\cB_F(1,b) =\frac{1}{q^2}\sum_{\alpha,\beta \in \F_q}\chi_1\left(b\left(\alpha+\beta \right) \right) S_{\alpha, \beta}^2.  \]
Let us denote $ \displaystyle T_b =  S_{\alpha,\beta}^2$. Now we shall consider two cases, namely, $n/e$ odd and $n/e$ even, respectively.\\
\\
\textbf{Case 1}: $n/e$ is odd. We consider the following subcases.

\begin{enumerate}
 \item If $\alpha = \beta$ and $\beta \in  \F_{2^e}$, then
\[ T_b^{[1]} = q^2\,\chi_1(\beta)^2 = q^2. \]

\item If $\alpha = \beta$ and $\beta \in \F_{2^n} \backslash \F_{2^e}$, then 
\[T_b^{[2]} = 0.\]

\item 
If $\alpha \neq \beta$ and $\beta \in \F_{2^e}$, then 
\[T_b^{[3]} = 0.\]

\item 
If $\alpha \neq \beta$ and $\beta \in \F_{2^n} \backslash \F_{2^e}$ then 
\begin{equation*}
T_b^{[4]} =
\begin{cases}
0 &~\mbox{if}~ {\rm Tr}_e(B\gamma^{-1})\neq 1, \\
\displaystyle
2^{n+e}  &~\mbox{if}~ {\rm Tr}_e(B\gamma^{-1})= 1.
\end{cases}
\end{equation*}
%where $\gamma \in \F_{q}$ is the unique element such that $\gamma^{2^k+1}= A.$
\end{enumerate}

Nyberg~\cite[Proposition 3]{NY} showed that the differential uniformity of the Gold function $x\mapsto x^{2^k+1}$ over $\F_{2^n}$ is $2^e$, where $e=\gcd(k,n)$. Also, from~\cite{cid}, we know that the boomerang uniformity of the APN function equals~$2$.    Boura and Canteaut~\cite[Proposition 8]{BC} proved that when $n/e$ is odd and $n\equiv2 \pmod 4$, then the differential as well as the boomerang uniformity of the Gold function $x\mapsto x^{2^k+1}$ is $4$. 
Our first theorem in this section generalizes the two previously mentioned results, and gives the boomerang uniformity of the Gold function for any parameters, when $\frac{n}{e}$ is odd. Note that we would require the notion of  Walsh-Hadamard transform in the proof of this theorem, which is defined as follows.

For $f:\F_{2^n}\to \F_2$ we define the {\it Walsh-Hadamard transform} to be the integer-valued function
\[
\displaystyle
 \cW_f(u)  = \sum_{x\in \F_{2^n}}(-1)^{f(x)+\Tr(u x)}, u \in \mathbb{F}_{2^n}.
\]
 The Walsh transform $ \cW_{F}(a,b)$ of an $(n,m)$-function $F: \F_{2^n} \to \F_{2^m}$ at $a\in \F_{2^n}, b\in \F_{2^m}$ is the Walsh-Hadamard transform of its component function ${\rm Tr}^m_1(bF(x))$ at $a$, that is,
\[
  \cW_{F}(a,b)=\sum_{x\in\F_{2^n}} (-1)^{\Tr^m_1(bF(x))-\Tr^n_1(ax)}.
\]

\begin{thm} \label{generalization}
 Let $F(x)=x^{2^k+1}$, $1\leq k < n$, be a function on $\F_{q}$, $q=2^n$, $n\geq2$. Let $c=1$ and $n/e$ be odd, where $e=\gcd(k,n)$. Then the $c$-BCT entry $_1\cB_F(1,b)$ of $F$ at $(1,b)$ is   
 \begin{equation*}
 _1\cB_F(1,b)= 0,~\text{or},~2^e,
\end{equation*}
 if $\Tr_e\left(b^{\frac{1}{2}}\right)=0$, respectively, $\Tr_e\left(b^{\frac{1}{2}}\right) \neq 0$.
\end{thm}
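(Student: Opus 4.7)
The plan is to start from the formula
\[_1\cB_F(1,b) = \frac{1}{q^2}\sum_{\alpha,\beta \in \F_q}\chi_1(b(\alpha+\beta)) S_{\alpha,\beta}^2\]
and substitute the four-case evaluation of $T_b = S_{\alpha,\beta}^2$ already established above. Cases [2] and [3] vanish outright. In Case [1] one has $\alpha=\beta\in\F_{2^e}$, so $\chi_1(b(\alpha+\beta))=1$ and $T_b^{[1]}=q^2$; there are exactly $2^e$ such pairs, yielding a contribution of $+2^e$ after dividing by $q^2$. The whole problem therefore reduces to evaluating Case [4].

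For Case [4] I reparametrize via $A=\alpha+\beta\in\F_q^*$ and then by $\gamma\in\F_q^*$ through the bijection $A=\gamma^{2^k+1}$, whose validity is guaranteed by Lemma~\ref{gcd} when $n/e$ is odd. Using $\chi_1(x^{2^k})=\chi_1(x)$ throughout, the trace condition $\Tr_e(B\gamma^{-1})=1$ rewrites as $\Tr_e(\beta w)=1$, where $w:=\gamma^{-2^k}+\gamma^{-1}$. Two observations then drive the count: $w=0$ iff $\gamma^{2^k-1}=1$, iff $\gamma\in\F_{2^e}^*$; and $\Tr_e(w)=\Tr_e(\gamma^{-2^k})+\Tr_e(\gamma^{-1})=2\Tr_e(\gamma^{-1})=0$ in characteristic two, since $e\mid k$ makes the Frobenius $x\mapsto x^{2^k}$ preserve $\Tr_e$. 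Hence for $\gamma\notin\F_{2^e}$ the $\F_{2^e}$-linear form $\beta\mapsto\Tr_e(\beta w)$ is nonzero, so exactly $2^{n-e}$ elements $\beta\in\F_{2^n}$ satisfy $\Tr_e(\beta w)=1$, and because $\Tr_e(w)=0$ none of them lies in $\F_{2^e}$; for $\gamma\in\F_{2^e}^*$ no $\beta$ works at all. This is the technical heart of the argument, and the main obstacle is really just spotting that the apparent need to remove an $\F_{2^e}$-contribution from the counting vanishes of its own accord because $\Tr_e(w)=0$.

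Putting it together, Case [4] contributes
\[\frac{2^{n+e}\cdot 2^{n-e}}{q^2}\sum_{\gamma\in\F_q^*\setminus\F_{2^e}}\chi_1(b\gamma^{2^k+1})=\sum_{\gamma\in\F_q}\chi_1(b\gamma^{2^k+1})-\sum_{\gamma\in\F_{2^e}}\chi_1(b\gamma^{2^k+1}).\]
The first sum vanishes by Lemma~\ref{zero}(1) since $b\neq 0$ and $n/e$ is odd. For the second, $\gamma\in\F_{2^e}$ forces $\gamma^{2^k}=\gamma$, hence $\gamma^{2^k+1}=\gamma^2$, and after the bijective substitution $\delta=\gamma^2$ together with the transitivity identity $\Tr_1^n(b\delta)=\Tr_1^e(\delta\,\Tr_e(b))$, the subsum equals $2^e$ if $\Tr_e(b)=0$ and $0$ otherwise. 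Combining with the $+2^e$ from Case [1] gives $_1\cB_F(1,b)=0$ when $\Tr_e(b)=0$ and $_1\cB_F(1,b)=2^e$ when $\Tr_e(b)\neq 0$. Finally, because $\Tr_e$ is additive in characteristic two, $\Tr_e(b^{1/2})^2=\Tr_e(b)$, so the criterion $\Tr_e(b)=0$ is equivalent to $\Tr_e(b^{1/2})=0$, which matches the stated dichotomy.
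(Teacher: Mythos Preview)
Your proof is correct, and it is substantially simpler than the paper's own argument.

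Both proofs start identically: after discarding the vanishing Cases [2], [3] and collecting the contribution $2^e$ from Case [1], the entire problem is to evaluate
\[
\frac{2^{e}}{2^n}\sum_{(\alpha,\beta)\in\mathcal{F}}\chi_1\bigl(b(\alpha+\beta)\bigr),
\]
which equals $\displaystyle \frac{2^{e}}{2^n}\sum\chi_1\bigl(b\gamma^{2^k+1}\bigr)$ over those pairs $(\gamma,\beta)$ with $\gamma\in\F_q^*$, $\beta\in\F_{2^n}\setminus\F_{2^e}$, and $\Tr_e(B\gamma^{-1})=1$. At this point the two proofs diverge. The paper fixes $\beta$ first, identifies the set of admissible $\gamma^{-1}$ as an affine $\F_2$-subspace of the form $(\beta^{2^{-k}}+\beta)^{-1}+\langle\beta^{2^{-k}}+\beta\rangle^{\perp_e}$, invokes the Poisson summation formula, computes the relevant orthogonal complement as $(\beta^{2^{-k}}+\beta)\F_{2^e}$, and only after several further manipulations reduces to the character sum $\sum_{\gamma}\chi_1(b\gamma^{2^k+1})$. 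You instead fix $\gamma$ first, which is the natural variable since the summand $\chi_1(b\gamma^{2^k+1})$ depends only on it. The trace condition becomes $\Tr_e(\beta w)=1$ with $w=\gamma^{-2^k}+\gamma^{-1}$; the two observations $w=0\Leftrightarrow\gamma\in\F_{2^e}^*$ and $\Tr_e(w)=0$ make the $\beta$-count equal to $2^{n-e}$ for every $\gamma\notin\F_{2^e}$ (with no correction needed for the excluded $\beta\in\F_{2^e}$, as you note). The inner sum thus collapses to a constant, bypassing Poisson summation entirely and landing directly on $\sum_{\gamma\in\F_q\setminus\F_{2^e}}\chi_1(b\gamma^{2^k+1})$, which you dispatch with Lemma~\ref{zero}(1) and the transitivity of trace. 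Your approach is shorter and uses no tools beyond those already quoted in Section~\ref{S2}; the paper's route, while correct, carries considerably more overhead.
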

\begin{proof}
For every $\alpha,\beta$, let $A=\alpha+\beta,B=\beta^{2^{-k}}+\beta$, and
 $\gamma \in \F_{q}$ be the unique element such that $\gamma^{2^k+1}= A$. Further, let
 \begin{align*}
  \mathcal{A} &=\{ (\alpha, \beta) \in \F_q^2 \mid \alpha=\beta \in \F_{2^e}\},\\
 \mathcal{B} &=\{ (\alpha, \beta) \in \F_q^2 \mid \alpha=\beta \in \F_q \backslash \F_{2^e}\},\\
 \mathcal{C}&=\{ (\alpha, \beta) \in \F_q^2 \mid \alpha\neq\beta~and~ \beta \in \F_{2^e}\},\\
 \mathcal{D}&=\{ (\alpha, \beta) \in \F_q^2 \mid \alpha\neq\beta~and~ \beta \in \F_q \backslash \F_{2^e}\},\\
 \mathcal{E}&=\{ (\alpha, \beta) \in \mathcal{D} \mid {\rm Tr}_e(B\gamma^{-1})\neq 1\},\\
 \mathcal{F}&= \{(\alpha, \beta) \in \mathcal{D} \mid  {\rm Tr}_e(B \gamma^{-1})= 1 \}.
 \end{align*}
Then,
\allowdisplaybreaks
\begin{align*}
 _1\cB_F(1,b)=& \frac{1}{q^2} \left( \sum_{(\alpha, \beta) \in \mathcal{A}}  \chi_1(b(\alpha+\beta))T_b^{[1]} + \sum_{(\alpha, \beta) \in \mathcal{B}}  \chi_1(b(\alpha+
 \beta))T_b^{[2]}  +\sum_{(\alpha, \beta) \in \mathcal{C}}  \chi_1(b(\alpha+\beta))T_b^{[3]}+\right. \\
& \left. \sum_{(\alpha, \beta) \in \mathcal{E}}  \chi_1(b(\alpha+\beta))T_b^{[4]} +\sum_{\alpha, \beta \in \mathcal{F}}  \chi_1(b(\alpha+\beta))T_b^{[4]} \right) \\
 =& \frac{1}{q^2} \left( \sum_{(\alpha, \beta) \in \mathcal{A}} q^2 +\sum_{(\alpha, \beta) \in \mathcal{F}}  \chi_1(b(\alpha+\beta))2^{n+e}\right)\\
 =& 2^e+ \frac{2^e}{2^n} \sum_{(\alpha, \beta) \in \mathcal{F}}  \chi_1(b(\alpha+\beta)).
  \end{align*}

As  customary, $t^{-1}=t^{2^n-2}$, rendering $0^{-1}=0$. For each $\beta\in \F_{2^n}\setminus \F_{2^e}$, we let  (if $\beta\in\F_{2^e}$, $Y_\beta=\F_{2^n}$)
 \begin{align*}
 Y_\beta&=\left\{\gamma^{-1} \in\F_{2^n}\,:\, \Tr_e\left( (\beta^{2^{-k}}+\beta)\gamma^{-1}\right)=1 \right\},
 \end{align*}
 and
 \begin{align*}
 T_\beta&=\left\{d\in\F_{2^n}\,:\, \Tr_e((\beta^{2^{-k}}+\beta)d)=0  \right\}=\langle\beta^{2^{-k}}+\beta \rangle^{\perp_e}.
 \end{align*}
We shall use below  that when $\frac{n}{e}$ is odd, then $\Tr_e(1)=1$.
We label by $\langle S\rangle_e$ the $\F_{2^e}$-linear subspace in $\F_{2^n}$ generate by $S$ and we write $S^{\perp_e}$, for the trace orthogonal (via the relative trace $\Tr_e$) of the subspace $\langle S\rangle_e$ (if $e=1$, we drop the subscripts).
 Since $\Tr_e(1)=1$, then, $(\beta^{2^{-k}}+\beta)^{-1}\in Y_\beta$.
 If $\gamma_1^{-1},\gamma_2^{-1}\in Y_\beta$, then  $\gamma_1^{-1}+\gamma_2^{-1}\in T_\beta$, of cardinality $|T_\beta|=2^{n-1}$. Reciprocally, if $\gamma^{-1}\in Y_\beta$ and $d\in T_\beta$, it is easy to see that $\gamma^{-1}+d\in Y_\beta$. 
 Therefore, $Y_\beta$ is the affine subspace $Y_\beta=\gamma_\beta+T_\beta$, where $\gamma_\beta=(\beta^{2^{-k}}+\beta)^{-1}$.

Next, we observe that the  kernel of $\phi:\beta\mapsto \beta^{2^{-k}}+\beta$, say $\ker(\phi)$, is an $\F_2$-linear space of dimension $e$ (in fact, it is exactly $\F_{2^e}$) and the image of $\phi$, say $\im(\phi)$, is an $\F_2$-linear space  of dimension ${n-e}$. 
Further, we show that $\im(\phi)^{\perp_e}=\ker(\phi)$. We use below the fact that $\Tr_e(x^{2^e})=\Tr_e(x)$ and $e\,|\,k$. Let $u\in \im(\phi)^{\perp_e}$, that is, for all $\beta\in\F_{2^n}$,
  \begin{align*}
0&=\Tr_e(u(\beta^{2^{-k}}+\beta))=\Tr_e(u \beta^{2^{-k}}) +\Tr_e(u\beta)=\Tr_e(u^{2^k}\beta)+\Tr_e(u\beta)=\Tr_e((u+u^{2^k})\beta),
\end{align*}
and so, $u^{2^k}+u=0$, which shows the claim. For easy referral, if we speak of the dimension of an $F_{2^e}$-linear space $S$, we shall be using the notation $\dim_e S$ (no subscript if $e=1$).
 
 We will be using below the Poisson summation formula (see~\cite[Corollary 8.9]{CH1} and \cite[Theorem 2.15]{CS17}), 
 which states that if $f:\F_{2^n}\to \mathbb{R}$ and $S$ is a subspace of $\F_{2^n}$ of dimension $\dim S$, then
  \[
 \sum_{u\in \alpha+S} \cW_f(u) (-1)^{\Tr(\beta u)} =2^{\dim S} (-1)^{\Tr(\alpha\beta)} \sum_{u\in \beta+S^\perp} f(u) (-1)^{\Tr(\alpha u)},
 \]
and in particular,
 \[
 \sum_{u\in S} \cW_f(u)=2^{\dim S}\sum_{u\in S^\perp} f(u).
 \]

 Now, we are able to compute our sum (labelling $\alpha=\beta+\gamma^{2^k+1}$,  and writing $\phi^{-1}(t)=\{\beta\,:\, \phi(\beta)=t\}$; we also note that when $\frac{n}{e}$ is odd, $\gcd(2^k+1,2^n-1)=1$, and so $\gamma\mapsto \gamma^{2^k+1}$ is a permutation)
  \begin{align*}
& _1\cB_F(1,b) =2^e+ \frac{2^e}{2^n} \sum_{\substack{\beta\in \F_{2^n}\backslash \F_{2^e},\gamma\in \F_{2^n}\\   \Tr_e\left((\beta^{2^{-k}}+\beta)\gamma^{-1} \right)=1}}\chi_1\left(b \gamma^{2^k+1}\right)\\
 &\quad=2^e+ \frac{2^e}{2^n} \sum_{\beta\in \F_{2^n}\backslash \F_{2^e}}  \sum_{ \gamma^{-1} \in Y_\beta} \chi_1\left(b \gamma^{2^k+1}\right)\\
 &\quad=2^e+ \frac{2^{e}}{2^n} \sum_{\beta\in \F_{2^n}}  \sum_{x\in(\beta^{2^{-k}}+\beta)^{-1}+\langle\beta^{2^{-k}}+\beta \rangle^{\perp_e}} \chi_1\left(b x^{-2^k-1}\right)\\
  & \qquad\qquad \text{ \small (we used here that $Y_\beta=(\beta^{2^{-k}}+\beta)^{-1}+T_\beta$; we also added}\\
& \qquad\qquad\qquad\text{\small $\beta \in \F_{2^e}$, as it contributes 0 to the inner sum)}\\
&\quad=2^e+ \frac{2^{e}}{2^n} \sum_{\beta\in \F_{2^n}}  2^{-\dim S} \sum_{u \in\left(\langle\beta^{2^{-k}}+\beta \rangle^{\perp_e}\right)^\perp}   \cW_{g_\beta}(u) (-1)^{\Tr\left(u(\beta^{2^{-k}}+\beta)^{-1}\right)}\\
& \quad \text{ \small (by Poisson summation with $S^\perp= \langle  \beta^{2^{-k}}+\beta \rangle^{\perp_e}$, and $g_\beta(x)=\chi_1\big(b x^{-2^k-1}\big)$)}.
\end{align*}
We now analyze the $\F_2$-linear space 
\[
\left(\langle\beta^{2^{-k}}+\beta \rangle^{\perp_e}\right)^\perp
=\{x \in \F_{2^n}\,:\, \Tr(d x)=0, \forall d \text{ with } \Tr_e(d(\beta^{2^{-k}}+\beta))=0\}.
\]

Further, $\F_{2^n}$ has dimension $n/e$ as an $\F_{2^e}$-linear space and so,
$\dim_e \langle\beta^{2^{-k}}+\beta \rangle^{\perp_e}=\frac{n}{e}-1$ as an $\F_{2^e}$-linear space, and since $\F_{2^e}$ has dimension $e$ as an $\F_2$-linear space, then $\dim  \langle\beta^{2^{-k}}+\beta \rangle^{\perp_e}=n-e$ as an $\F_2$-linear space. Thus, $\dim \left( \langle\beta^{2^{-k}}+\beta \rangle^{\perp_e}\right)^\perp=e$. Moreover, $\Tr_e(\beta^{2^{-k}}+\beta)=0$ and if $u\in\F_{2^e}$  then $\Tr_e(u(\beta^{2^{-k}}+\beta)=u\Tr_e(\beta^{2^{-k}}+\beta)=0$, and consequently (since the dimensions match and $(\beta^{2^{-k}}+\beta)\F_{2^e}\subseteq S$)
\[
S=\left( \langle\beta^{2^{-k}}+\beta \rangle^{\perp_e}\right)^\perp=(\beta^{2^{-k}}+\beta)\F_{2^e}.
\]

We are now ready to continue the computation, thus,
\allowdisplaybreaks
  \begin{align*}
&_1\cB_F(1,b) 
=2^e+ \frac{2^{e}}{2^n} 2^{-e}  \sum_{\beta\in \F_{2^n}}  \sum_{u \in (\beta^{2^{-k}}+\beta)\F_{2^e}}   \cW_{g_\beta}(u)(-1)^{\Tr\left(u(\beta^{2^{-k}}+\beta)^{-1}\right)}\\
&=2^e+ \frac{2^{e}}{2^n} 2^{-e}  \sum_{\beta\in \F_{2^n}}  \sum_{d' \in \F_{2^e}}   \cW_{g_\beta}(d'(\beta^{2^{-k}}+\beta))(-1)^{\Tr(d')}\\
&=2^e+ \frac{2^{e}}{2^n} 2^{-e}  \sum_{\beta\in \F_{2^n}}  \sum_{d' \in \F_{2^e}}
\sum_{x\in\F_{2^n}} \chi_1\left(b x^{-2^k-1}+d'x(\beta^{2^{-k}}+\beta)+d' \right)\\
&=2^e+ \frac{2^{e}}{2^n} 2^{-e}   \sum_{d'\in \F_{2^e}} \sum_{x\in\F_{2^n}} \chi_1\left(b x^{-2^k-1}+d'\right) \sum_{\beta\in \F_{2^n}} 
\chi_1\left( d'x(\beta^{2^{-k}}+\beta) \right)\\
&=2^e+ \frac{2^{e}}{2^n} 2^{-e}   \sum_{d'\in \F_{2^e}} \sum_{x\in\F_{2^n}} \chi_1\left(b x^{-2^k-1}+d'\right) \sum_{\beta\in \F_{2^n}} 
\chi_1\left(\left((d'x)^{2^k}+d'x\right)\beta \right)\\
&\qquad \text{\small(since  $\Tr\left( d'x(\beta^{2^{-k}}+\beta) \right)=\Tr\big(((d'x)^{2^k}+d'x)\beta \big)=\Tr(d'(x^{2^k}+x)\beta)$)}\\
&=2^e+ \frac{2^{e}}{2^n} 2^{n-e}    \sum_{\substack{d'\in \F_{2^e},x\in\F_{2^n}\\
d'(x^{2^k}+x)=0 }} \chi_1\left(b x^{-2^k-1}+d'\right) \\
&=2^e+ \frac{2^{e}}{2^n} 2^{n-e} \sum_{\substack{d'\in \F_{2^e}^*,x\in\F_{2^e}}} \chi_1(bx^{-2}+d')+ \sum_{x \in \F_{2^n}} \chi_1(bx^{-2^k-1})\\
&=2^e+ \frac{2^{e}}{2^n} 2^{n-e} \sum_{d'\in \F_{2^e}^*,x\in\F_{2^e}} \chi_1(bx^{-2}+d')\\
&=2^e- 2^{e}  \delta_0\left(\Tr_e\left(b^{\frac{1}{2}}\right)\right),
\end{align*}
where $\delta_0$ is  the Dirac symbol, defined by $\delta_0(c)=1$, if $c=0$, and 0, otherwise. Thus, $_1\cB_F(1,b) \in\{0,2^e\}$, and the claim of our theorem is shown.
\end{proof}

\textbf{Case 2}: $n/e$ is even.
\begin{enumerate}
\item If $\alpha = \beta$ and $\beta \in  \F_{2^e}$, then
\[T_b^{[1]} = q^2\,\chi_1(\beta)^2 = q^2.\]

\item If $\alpha = \beta$ and $\beta \in \F_{2^n} \backslash \F_{2^e}$, then 
\[T_b^{[2]} = 0.\]

\item If $\alpha \neq \beta$ and $\beta \in \F_{2^e}$, then 
\begin{equation*}
T_b^{[3]} =
\begin{cases}
2^n &~\mbox{if}~  A \neq g^{t(2^e+1)}~\mbox{for any integer}~ t, \\
2^{n+2e} &~\mbox{if}~ A = g^{t(2^e+1)}~\mbox{for some integer}~ t.
\end{cases}
\end{equation*}

\item If $\alpha \neq \beta$ and $\beta \in \F_{2^n} \backslash \F_{2^e}$, then
\begin{enumerate}
 \item If $A \neq g^{t(2^e+1)}$ for any integer $t$, then 
 \[ T_b^{[4(a)]}= 2^n. \] 

 \item If $A = g^{t(2^e+1)}$ for some integer $t$, then
 \begin{enumerate}
  \item If the equation $L_A(x)= B^{2^k}$ is not solvable, where $L_A(x)= A^{2^k}x^{2^{2k}}+Ax$, then 
  \[ T_{b}^{[4(b)(\RNum{1})]}=0.\]
  \item If the equation $L_A(x)= B^{2^k}$ is solvable, then
  \begin{equation*}
T_b^{[4(b)(\RNum{2})]} =
\begin{cases}
2^n &~\mbox{if}~ {\rm Tr}_e(A) \neq 0, \\
2^{n+2e} &~\mbox{if}~ {\rm Tr}_e(A) = 0.
\end{cases}
\end{equation*}
\end{enumerate}
\end{enumerate}
\end{enumerate}
Now we shall summarize the above discussion in the following theorem.
\begin{thm}
Let $F(x)=x^{2^k+1}$, $1\leq k < n$ be a function on $\F_{2^n}$, $n\geq2$. Let $c=1$ and $n/e$ be even, where $e=\gcd(k,n)$. Then the $c$-BCT entry $_1\cB_F(1,b)$ of $F$ at $(1,b)$ is given by  
\begin{align*}
2^e+ \frac{1}{2^n} \sum_{(\alpha, \beta)\in \mathcal{G \cup I \cup K}}\chi_1(b(\alpha+\beta))+\frac{2^{2e}}{2^n} \sum_{(\alpha, \beta)\in \mathcal{H \cup L}}\chi_1(b(\alpha+\beta)),
\end{align*}
with $A=\alpha+\beta$, $B=\beta^{2^{n-k}}+\beta$, $L_A(x)= A^{2^k}x^{2^{2k}}+Ax$, and
\begin{align*}
\mathcal{G} = &\{(\alpha, \beta) \in \mathcal{C} \mid A \neq g^{t(2^e+1)}~\mbox{for any integer}~t \},\\
\mathcal{H} = &\{(\alpha, \beta) \in \mathcal{C} \mid A = g^{t(2^e+1)}~\mbox{for some integer}~t \},\\
\mathcal{I} = &\{(\alpha, \beta) \in \mathcal{D} \mid A \neq g^{t(2^e+1)}~\mbox{for any integer}~t \},\\ 
\mathcal{K} = &\{(\alpha, \beta) \in \mathcal{D} \mid A = g^{t(2^e+1)}~\mbox{for some integer}~t,~ {\rm Tr}_e(A)\neq0 , L_A(x) = B^{2^k}~\mbox{is solvable}\},\\
\mathcal{L} = &\{(\alpha, \beta) \in \mathcal{D} \mid A = g^{t(2^e+1)}~\mbox{for some integer}~t,~ {\rm Tr}_e(A)=0, L_A(x) = B^{2^k}~\mbox{is solvable}\}.
\end{align*}
\end{thm}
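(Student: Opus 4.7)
The plan is to assemble the result directly from the exhaustive case analysis of $T_b=S_{\alpha,\beta}^2$ carried out immediately above the theorem. Starting from the identity
\[
_1\cB_F(1,b)=\frac{1}{q^2}\sum_{\alpha,\beta\in\F_q}\chi_1(b(\alpha+\beta))\,S_{\alpha,\beta}^2,
\]
I would partition the index set $\F_q\times\F_q$ into the four sets $\mathcal{A},\mathcal{B},\mathcal{C},\mathcal{D}$ already introduced in the proof of Theorem~\ref{generalization}, and then further refine $\mathcal{C}$ and $\mathcal{D}$ according to whether $A=\alpha+\beta$ lies in $\{g^{t(2^e+1)}\}$, whether $\Tr_e(A)=0$, and whether the linearized equation $L_A(x)=B^{2^k}$ is solvable. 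This is exactly the partition that defines the sets $\mathcal{G},\mathcal{H},\mathcal{I},\mathcal{K},\mathcal{L}$ in the statement.

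Next, I would evaluate each piece using the tabulated values. On $\mathcal{A}$ one has $\alpha+\beta=0$ and $T_b^{[1]}=q^2$, so the contribution is $\frac{1}{q^2}\cdot|\mathcal{A}|\cdot q^2=2^e$. The set $\mathcal{B}$ contributes nothing since $T_b^{[2]}=0$. On $\mathcal{G}\subset\mathcal{C}$ and $\mathcal{I}\subset\mathcal{D}$ the value $T_b=2^n$ yields the weight $\frac{1}{2^n}$, while on $\mathcal{H}\subset\mathcal{C}$ the value $T_b=2^{n+2e}$ yields the weight $\frac{2^{2e}}{2^n}$. Finally, on the part of $\mathcal{D}$ where $A=g^{t(2^e+1)}$, the non-solvable sub-case $T_b^{[4(b)(\RNum{1})]}=0$ drops out, the solvable sub-case with $\Tr_e(A)\neq 0$ (i.e.~$\mathcal{K}$) contributes with weight $\frac{1}{2^n}$, and the solvable sub-case with $\Tr_e(A)=0$ (i.e.~$\mathcal{L}$) contributes with weight $\frac{2^{2e}}{2^n}$. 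Collecting terms of equal weight and writing the surviving contributions as a sum over $\mathcal{G}\cup\mathcal{I}\cup\mathcal{K}$ (weight $\frac{1}{2^n}$) plus a sum over $\mathcal{H}\cup\mathcal{L}$ (weight $\frac{2^{2e}}{2^n}$) yields precisely the claimed formula.

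The main obstacle, such as it is, is bookkeeping rather than analysis: one must check that the refinement is a genuine partition, that $\mathcal{G}\cup\mathcal{H}=\mathcal{C}$, and that $\mathcal{I}$ together with $\mathcal{K}$, $\mathcal{L}$, and the zero-contribution set $\{(\alpha,\beta)\in\mathcal{D}:A=g^{t(2^e+1)},\,L_A(x)=B^{2^k}\text{ is not solvable}\}$ exhausts $\mathcal{D}$ disjointly. No new Weil-sum input beyond Lemmas~\ref{pp},~\ref{zero},~\ref{even} is required, because those lemmas have already been used in deriving the values of $T_b^{[1]},\ldots,T_b^{[4(b)(\RNum{2})]}$; the theorem is simply their consolidated character-sum shadow.
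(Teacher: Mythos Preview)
Your proposal is correct and follows essentially the same route as the paper: the paper introduces the auxiliary set $\mathcal{J}=\{(\alpha,\beta)\in\mathcal{D}:A=g^{t(2^e+1)},\ L_A(x)=B^{2^k}\text{ not solvable}\}$, splits the double sum over $\mathcal{A},\mathcal{B},\mathcal{G},\mathcal{H},\mathcal{I},\mathcal{J},\mathcal{K},\mathcal{L}$, plugs in the precomputed $T_b$-values (with $\mathcal{B}$ and $\mathcal{J}$ vanishing), and regroups exactly as you describe. Your remark that the only content is bookkeeping---verifying that $\mathcal{G}\cup\mathcal{H}=\mathcal{C}$ and $\mathcal{I}\cup\mathcal{J}\cup\mathcal{K}\cup\mathcal{L}=\mathcal{D}$ disjointly---is precisely the point.
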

\begin{proof}
For the proof, we need to define
\[
\mathcal{J} = \{(\alpha, \beta) \in \mathcal{D} \mid A = g^{t(2^e+1)}~\mbox{for an integer}~t, L_A(x) = B^{2^k}~\mbox{is not solvable}\}.
\]
Then 
\begin{equation*}
 \begin{split}
  _1\cB_F(1,b) &= \frac{1}{q^2} \left( \sum_{(\alpha, \beta) \in \mathcal{A}}  \chi_1(b(\alpha+\beta))T_b^{[1]} + \sum_{(\alpha, \beta) \in \mathcal{B}}  \chi_1(b(\alpha+
 \beta))T_b^{[2]} \right. \\
 &\quad \left. +\sum_{(\alpha, \beta) \in \mathcal{G}}  \chi_1(b(\alpha+\beta))T_b^{[3]}+\sum_{(\alpha, \beta) \in \mathcal{H}}  \chi_1(b(\alpha+\beta))T_b^{[3]} \right. \\
 &\quad \left. +\sum_{(\alpha, \beta) \in \mathcal{I}}  \chi_1(b(\alpha+\beta))T_b^{[4(a)}+\sum_{(\alpha, \beta) \in \mathcal{J}}  \chi_1(b(\alpha+\beta))T_b^{[4(b)(\RNum{1})]} \right. \\
 & \quad\left. +\sum_{(\alpha, \beta) \in \mathcal{K}}  \chi_1(b(\alpha+\beta))T_b^{[4(b)(\RNum{2})]} +\sum_{(\alpha, \beta) \in \mathcal{L}}  \chi_1(b(\alpha+\beta))T_b^{[4(b)(\RNum{2})]}\right) \\
 &= \frac{1}{q^2} \left( \sum_{(\alpha, \beta) \in \mathcal{A}} q^2 + 2^n \sum_{(\alpha, \beta) \in \mathcal{G \cup I \cup K}}  \chi_1(b(\alpha+\beta))  + 2^{n+2e} \sum_{(\alpha, \beta) \in \mathcal{H \cup L}}  \chi_1(b(\alpha+\beta))\right)\\
 &= 2^e+ \frac{1}{2^n} \sum_{(\alpha, \beta)\in \mathcal{G \cup I \cup K}}\chi_1(b(\alpha+\beta))+\frac{2^{2e}}{2^n} \sum_{(\alpha, \beta)\in \mathcal{H \cup L}}\chi_1(b(\alpha+\beta)).
 \end{split}
\end{equation*}
This completes the proof.
\end{proof}
\begin{cor}
 Let $F(x)=x^{2^k+1}$, $1\leq k < n$, be a function on $\F_{q}$,   $n\geq2$. Let $c=1$ and $n/e$ be even, where $e=\gcd(k,n)$. With the notations of the previous theorem,  the $c$-boomerang uniformity  of $F$ satisfies
 \[ 
 \beta_{F,c}\leq 2^e +2^{-n}| \mathcal{G \cup I \cup K}|+2^{2e-n} | \mathcal{H \cup L}|.
 \] 
\end{cor}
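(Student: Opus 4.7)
The plan is to read off the upper bound directly from the exact formula for $_1\cB_F(1,b)$ supplied by the preceding theorem, after first reducing the maximum that defines $\beta_{F,c}$ to a maximum over $b$ alone. Since $F(x)=x^d$ with $d=2^k+1$ is a power function, the substitution $x\mapsto ax$, $y\mapsto ay$ in the defining system~\eqref{eqsys} transforms it into the corresponding system with $a$ replaced by $1$ and $b$ replaced by $b\,a^{-d}$, giving the identity $_c\cB_F(a,b)={_c\cB_F}(1,b\,a^{-d})$ for every $a\in\F_q^*$. Consequently,
\[
\beta_{F,c}=\max_{a,b\in\F_q^*}{_c\cB_F}(a,b)=\max_{b\in\F_q^*}{_1\cB_F}(1,b),
\]
so it suffices to bound $_1\cB_F(1,b)$ uniformly in $b$.

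Next, I would apply the triangle inequality to the character-sum expression from the theorem. Writing
\[
{_1\cB_F}(1,b)=2^e+\frac{1}{2^n}\sum_{(\alpha,\beta)\in\mathcal{G}\cup\mathcal{I}\cup\mathcal{K}}\chi_1(b(\alpha+\beta))+\frac{2^{2e}}{2^n}\sum_{(\alpha,\beta)\in\mathcal{H}\cup\mathcal{L}}\chi_1(b(\alpha+\beta)),
\]
and using $|\chi_1(b(\alpha+\beta))|\le 1$ for every pair $(\alpha,\beta)$, each character sum is bounded by the cardinality of its index set. This gives
\[
{_1\cB_F}(1,b)\le 2^e+2^{-n}|\mathcal{G}\cup\mathcal{I}\cup\mathcal{K}|+2^{2e-n}|\mathcal{H}\cup\mathcal{L}|,
\]
a bound independent of $b$, from which taking the maximum over $b\in\F_q^*$ yields the claimed inequality.

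There is no real obstacle in this argument: the work has already been done in the preceding theorem. The only small point that deserves explicit mention is the power-function reduction in the first paragraph, since the corollary is stated in terms of $\beta_{F,c}$ (a maximum over $a$ and $b$) whereas the theorem provides information only at $a=1$. Beyond that, the bound is obtained by the most elementary estimate available—the triangle inequality applied to bounded characters—without any further structural input.
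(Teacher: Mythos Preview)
Your proposal is correct and matches the paper's (implicit) argument: the corollary is stated without proof because it follows immediately from the preceding theorem by bounding each character value by~$1$, exactly as you do. The only extra ingredient you make explicit is the standard power-function reduction $\beta_{F,c}=\max_{b\in\F_q^*}{_c\cB_F}(1,b)$, which the paper already records at the start of Section~\ref{S3}.
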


\section{The case $c \in \F_{2^e}\backslash \{0,1\}.$} \label{S5}

Since the case $c=1$ has already been considered in the previous section, throughout this section we assume that $c\neq1$.
Notice that when $c\in \F_{2^e}^*$, $\beta \in \F_{2^e} \Leftrightarrow \beta c^{-1} \in \F_{2^e}$. Recall that for any fixed $b\neq0$, the $c$-BCT entry is given by,
\[_c\cB_F(1,b) =\frac{1}{q^2}\sum_{\alpha,\beta \in \F_q}\chi_1\left(b\left(\alpha+\beta \right) \right) S_{\alpha, \beta} S_{c\alpha, c^{-1}\beta}.  \]
Let us denote $ \displaystyle T_b =  S_{\alpha,\beta}S_{c\alpha, c^{-1}\beta}$ (we will use superscripts to point out the case we are in, for its value). Recall that $A=\alpha+\beta$ and $B = \beta^{2^{n-k}}+\beta$. Let us denote $\gamma=A^{\frac{1}{2^k+1}}$, $A' = c\alpha+c^{-1}\beta$ and $B'= (c^{-1}\beta)^{2^{n-k}}+ c^{-1}\beta.$ It is easy to observe that the conditions $B=0$ and $B'=0$ are equivalent. Now we shall consider two cases namely, $\frac{n}{e}$ odd and $\frac{n}{e}$ even, respectively.\\
\textbf{Case 1}: $\frac{n}{e}$ is odd.
\begin{enumerate}
 \item Let $A=0, B=0$.
 \begin{enumerate}
  \item If $A'=0, B'=0$, then 
  \[T_b^{[1(a)]}= q^2\chi_1((1+c^{-1})\beta). \]
  \item If $ A'\neq0, B'=0$, then $S_{c\alpha, c^{-1}\beta}= 0$ and hence
 \[T_b^{[1(b)]} = 0.\]
 \end{enumerate} 
 \item Let $A=0, B\neq0$.
 In this case $S_{\alpha,\beta}=0$ and hence
 \[T_b^{[2]} = 0.\]
 \item Let $A\neq0, B=0$.
 Again $S_{\alpha,\beta}=0$ and hence
 $$T_b^{[3]} = 0$$
 \item Let $A\neq0, B\neq0.$
 \begin{enumerate}
  \item Assume $A'=0, B'\neq0$, then $S_{c\alpha,c^{-1}\beta}=0$ and hence
 \[ T_b^{[4(a)]} = 0.\] 
 
 \item Assume  $A'\neq0, B'\neq0.$ In this case, recall that $\gamma^{2^k+1}= A$ and let $\gamma' \in \F_{q}$ such that $(\gamma')^{2^k+1}= A'.$
 \begin{enumerate}
  \item If ${\rm Tr}_e(B\gamma^{-1}) \neq 1$, then $S_{\alpha, \beta} = 0$ and hence \[T_b^{[4(b)(\RNum{1})]}=0.\]
  \item If ${\rm Tr}_e(B\gamma^{-1}) = 1$ and $\rm{Tr}_e(B'(\gamma')^{-1}) \neq 1$, then $S_{c\alpha, c^{-1}\beta} = 0$ and hence 
  \[ T_b^{[4(b)(\RNum{2})]}=0.\]
  \item If ${\rm Tr}_e(B\gamma^{-1}) = 1$ and ${\rm Tr}_e(B'(\gamma')^{-1}) = 1$, then  
  \[ T_b^{[4(b)(\RNum{3})]}= 2^{n+e} \chi_1((1+c^{-1})\beta). \]
 \end{enumerate}
 \end{enumerate}
\end{enumerate}

We now use the above discussion in the following theorem.

\begin{thm}
 Let $F(x)=x^{2^k+1}$, $1\leq k < n$ be a function on $\F_{2^n}$, $n\geq2$. Let $c \in \F_{2^e} \backslash \{0,1\} $ and $n/e$ be odd, where $e=\gcd(k,n)$. Then the $c$-BCT entry $_c\cB_F(1,b)$ of $F$ at $(1,b)$ is given by 
 \begin{equation*}
1+ \frac{2^{e}}{2^n} \sum_{(\alpha,\beta) \in \mathcal{F \cap F'}} \chi_1(b\alpha+(1+c^{-1}+b)\beta)),  
\end{equation*}
where
\begin{align*}
 \mathcal{F} &= \{(\alpha,\beta)\in \F_{q}^2 \mid A,B \neq0 ~and ~ {\rm Tr}_e(B\gamma^{-1}) = 1 \},\\
 \mathcal{F'} &= \{(\alpha,\beta)\in \F_{q}^2 \mid A',B' \neq0 ~and ~ {\rm Tr}_e(B'(\gamma')^{-1}) = 1 \},
\end{align*}
and $A=\alpha+\beta$, $B = \beta^{2^{n-k}}+\beta$, $A' = c\alpha+c^{-1}\beta$ and $B'= (c^{-1}\beta)^{2^{n-k}}+ c^{-1}\beta$,  $\gamma=A^{\frac{1}{2^k+1}}$,  $\gamma'={A'}^{\frac{1}{2^k+1}}$.
\end{thm}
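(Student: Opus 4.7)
The plan is to start from the decomposition
\[
{_c\cB_F}(1,b) = \frac{1}{q^2}\sum_{\alpha,\beta \in \F_q}\chi_1(b(\alpha+\beta))\, S_{\alpha,\beta}\, S_{c\alpha, c^{-1}\beta}
\]
and substitute the explicit values of $S_{\alpha,\beta}$ and $S_{c\alpha, c^{-1}\beta}$ recorded in the case analysis of Section~\ref{S3}. The problem then reduces to identifying the pairs $(\alpha,\beta)$ for which \emph{both} Weil-sum factors are nonzero, and to summing the resulting characters.

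The crucial simplifying observation I would exploit is that for $c\in \F_{2^e}^*$, one has $\beta\in \F_{2^e}\Leftrightarrow c^{-1}\beta\in \F_{2^e}$, so $B=0\Leftrightarrow B'=0$. I would then walk through the four cases delineated by whether $A$ and $B$ vanish. When $A=B=0$, necessarily $\alpha=\beta\in \F_{2^e}$, which forces $B'=0$, and $A'=(c+c^{-1})\beta$. Since $c\neq 1$ in characteristic $2$ makes $c+c^{-1}\neq 0$, $A'=0$ only for $\beta=0$; only the trivial pair $(0,0)$ survives, contributing $T_b=q^2$ and hence $1$ to the total. When exactly one of $A,B$ vanishes, $S_{\alpha,\beta}=0$ by the vanishing subcases already listed in Section~\ref{S3}, killing the term. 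Finally, when $A,B,A',B'$ are all nonzero, the product vanishes unless \emph{both} trace conditions $\Tr_e(B\gamma^{-1})=1$ and $\Tr_e(B'(\gamma')^{-1})=1$ hold, i.e. $(\alpha,\beta)\in \mathcal{F}\cap \mathcal{F}'$.

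On $\mathcal{F}\cap \mathcal{F}'$, Lemma~\ref{odd} gives $S_{\alpha,\beta}=\left(\tfrac{2}{n/e}\right)^e 2^{(n+e)/2}\chi_1(\beta)$ and analogously $S_{c\alpha,c^{-1}\beta}=\left(\tfrac{2}{n/e}\right)^e 2^{(n+e)/2}\chi_1(c^{-1}\beta)$. Multiplying these factors squares the Jacobi symbol to $1$ and produces $T_b=2^{n+e}\chi_1((1+c^{-1})\beta)$. Assembling the two surviving contributions, the prefactor $1/q^2=2^{-2n}$ reduces $2^{n+e}$ to $2^e/2^n$, and absorbing $\chi_1(b(\alpha+\beta))$ into $\chi_1(b\alpha+(b+1+c^{-1})\beta)$ yields the claimed formula.

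The main obstacle I anticipate is purely bookkeeping: tracking which of $A,B,A',B'$ vanish across the case split and, in particular, verifying carefully that the case $A=B=0$ collapses to the single point $(0,0)$ once the constraint from $A'=0$ is imposed. Once that is settled, the remainder is a plug-and-play application of Lemma~\ref{odd} together with the equivalence $B=0\Leftrightarrow B'=0$.
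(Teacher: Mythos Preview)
Your proposal is correct and follows essentially the same approach as the paper: both arguments split the double sum according to the vanishing of $A,B,A',B'$, use the equivalence $B=0\Leftrightarrow B'=0$ for $c\in\F_{2^e}^*$, and identify the two surviving contributions as the singleton $(0,0)$ (yielding the constant~$1$) and the set $\mathcal{F}\cap\mathcal{F}'$ (yielding the character sum). The only subcase you pass over silently is $A,B\neq 0$ with $A'=0$, but this vanishes for the same reason as your ``exactly one of $A,B$ vanishes'' case applied to $S_{c\alpha,c^{-1}\beta}$, so the argument is complete.
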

\begin{proof}
 Let 
 \begin{align*}
  \mathcal{A'} &=\{ (\alpha,\beta)\in \F_{q}^2 \mid c\alpha=c^{-1}\beta~and~ c^{-1}\beta \in \F_{2^e}\},\\
  \mathcal{B'} &=\{ (\alpha,\beta)\in \F_{q}^2 \mid c\alpha=c^{-1}\beta~and~ c^{-1}\beta \in \F_q \backslash \F_{2^e}\}, \\
  \mathcal{C'} &=\{ (\alpha,\beta)\in \F_{q}^2 \mid c\alpha \neq c^{-1}\beta~and~ c^{-1}\beta \in \F_{2^e}\}, \\
  \mathcal{D'} &=\{ (\alpha,\beta)\in \F_{q}^2 \mid c\alpha \neq c^{-1}\beta~and~ c^{-1}\beta \in \F_q \backslash \F_{2^e}\}, \\
  \mathcal{E'} &=\{ (\alpha, \beta) \in \mathcal{D'} \mid {\rm Tr}_e(B'(\gamma')^{-1}) \neq 1\}.
 \end{align*}
 Then,
 \begin{equation*}
  \begin{split}
   _c\cB_F(1,b) &= \frac{1}{q^2} \left(\sum_{(\alpha, \beta) \in \mathcal{A \cap A'}}  \chi_1(b(\alpha+\beta))T_b^{[1(a)]} + \sum_{(\alpha, \beta) \in \mathcal{A \cap C'}}  \chi_1(b(\alpha+\beta))T_b^{[1(b)]} \right. \\
   & \left. + \sum_{(\alpha, \beta) \in \mathcal{B}}  \chi_1(b(\alpha+\beta))T_b^{[2]} + \sum_{(\alpha, \beta) \in \mathcal{C}}  \chi_1(b(\alpha+\beta))T_b^{[3]} \right. \\ 
   & \left. + \sum_{(\alpha, \beta) \in \mathcal{D \cap B'}}  \chi_1(b(\alpha+\beta))T_b^{[4(a)]} + \sum_{(\alpha, \beta) \in \mathcal{E}}  \chi_1(b(\alpha+\beta))T_b^{[4(b)(\RNum{1})]} \right. \\
   & \left. + \sum_{(\alpha, \beta) \in \mathcal{F \cap E'}}  \chi_1(b(\alpha+\beta))T_b^{[4(b)(\RNum{2})]} + \sum_{(\alpha, \beta) \in \mathcal{F \cap F'}}  \chi_1(b(\alpha+\beta))T_b^{[4(b)(\RNum{3})]} \right) \\
   &= \sum_{(\alpha, \beta) \in \mathcal{A \cap A'}} \chi_1(b\alpha+ (1+c^{-1}+b)\beta)) \\ &+ \frac{2^e}{2^n} \sum_{(\alpha, \beta) \in \mathcal{F \cap F'}}  \chi_1(b\alpha+ (1+c^{-1}+b)\beta)) \\
  &= 1 + \frac{2^e}{2^n} \sum_{(\alpha, \beta) \in \mathcal{F \cap F'}}  \chi_1(b\alpha+ (1+c^{-1}+b)\beta)).
  \end{split}
 \end{equation*}
This completes the proof.  
\end{proof}
\begin{cor}
 Let $F(x)=x^{2^k+1}$, $1\leq k < n$, be a function on $\F_{q}$,   $n\geq2$. Let $c\in\F_{2^e}\setminus\{0,1\}$ and $n/e$ be odd, where $e=\gcd(k,n)$. With the notations of the previous theorem,  the $c$-boomerang uniformity  of $F$ satisfies
 \[ 
 \beta_{F,c}\leq 1 +2^{e-n}| \mathcal{F  \cap F'}|.
 \] 
\end{cor}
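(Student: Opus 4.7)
The plan is to apply the triangle inequality directly to the explicit formula provided by the previous theorem. Since $F$ is a power map, for every nonzero $a$ the substitution $(x,y)\mapsto (ax,ay)$ transforms the defining system~\eqref{eqsys} at $(a,b)$ into the system at $(1,b/a^{2^k+1})$, so that ${_c\cB_F}(a,b)={_c\cB_F}(1,b/a^{2^k+1})$. Consequently the $c$-boomerang uniformity reduces to $\max_{b\in\F_q^*}{_c\cB_F}(1,b)$, which is exactly what the previous theorem evaluates.

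First I would recall the formula
\[
{_c\cB_F}(1,b)=1+\frac{2^e}{2^n}\sum_{(\alpha,\beta)\in \mathcal{F}\cap\mathcal{F}'}\chi_1\bigl(b\alpha+(1+c^{-1}+b)\beta\bigr),
\]
and observe that since $\chi_1$ takes values in $\{\pm 1\}$, the inner sum is a real number bounded in absolute value by the cardinality $|\mathcal{F}\cap\mathcal{F}'|$.

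Applying the triangle inequality yields
\[
{_c\cB_F}(1,b)\leq 1+\frac{2^e}{2^n}\,|\mathcal{F}\cap\mathcal{F}'|=1+2^{e-n}|\mathcal{F}\cap\mathcal{F}'|,
\]
uniformly in $b\in\F_q^*$. Taking the maximum over $b$ (and noting the reduction above takes care of $a$) establishes the stated bound on $\beta_{F,c}$.

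There is essentially no obstacle here since the identity from the previous theorem does all the heavy lifting; the only minor point to articulate carefully is the reduction ${_c\cB_F}(a,b)={_c\cB_F}(1,b/a^{2^k+1})$ that legitimizes restricting attention to ${_c\cB_F}(1,b)$ when computing $\beta_{F,c}$.
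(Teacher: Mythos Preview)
Your argument is correct and is precisely the implicit reasoning the paper intends: the corollary is stated immediately after the theorem with no proof, so the triangle inequality applied to the character sum (together with the standard reduction to $a=1$ for power maps, which the paper recalls at the start of Section~\ref{S3}) is exactly the intended one-line justification. There is nothing to add.
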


\textbf{Case 2}: $n/e$ is even.
\begin{enumerate}
 \item Let $A=0, B=0.$
 \begin{enumerate}
  \item  If $A'=0, B'=0$, then 
 \[ T_b^{[1(a)]} = \chi_1((1+c^{-1})\beta)~ q^2. \]
 
 \item If $A'\neq0, B'=0,$ let
\[\mathcal{G'} = \{(\alpha, \beta) \in \mathcal{C'} \mid A' \neq g^{t(2^e+1)}~\mbox{for any integer}~t \},\]
\[\mathcal{H'} = \{(\alpha, \beta) \in \mathcal{C'} \mid A' = g^{t(2^e+1)}~\mbox{for some integer}~t \}.\]
Then,
\begin{equation*}
T_b^{[1(b)]} =
\begin{cases}
(-1)^{\frac{m}{e}} 2^{m+n}  \chi_1((1+c^{-1})\beta) &~if~ (\alpha, \beta) \in \mathcal{A \cap G'},\\
(-1)^{\frac{m}{e}+1} 2^{m+n+e}  \chi_1((1+c^{-1})\beta) &~if~ (\alpha, \beta) \in \mathcal{A \cap H'}.
\end{cases}
\end{equation*} 
\end{enumerate}
\item Let $A=0, B\neq0.$\\ 
In this case $S_{\alpha,\beta}=0$ and hence
\[ T_b^{[2]} = 0. \]

\item Let $A\neq0, B=0.$
\begin{enumerate}
\item If $A'=0, B'=0$, then $T_b^{[3(a)]} $ is given by
\begin{equation*}
\begin{cases}
(-1)^{\frac{m}{e}} 2^{m+n}  \chi_1((1+c^{-1})\beta)  &~if~(\alpha, \beta) \in \mathcal{A' \cap G},\\
(-1)^{\frac{m}{e}+1} 2^{m+n+e}  \chi_1((1+c^{-1})\beta)&~if~(\alpha, \beta) \in \mathcal{A' \cap H}.
\end{cases}
\end{equation*}

\item If $A'\neq0, B'=0,$ then 
\begin{equation*}
T_b^{[3(b)]}=
\begin{cases}
2^n \chi_1((1+c^{-1})\beta) &~if~ (\alpha, \beta) \in \mathcal{G \cap G'},\\
-2^{n+e} \chi_1((1+c^{-1})\beta) &~if~ (\alpha, \beta) \in \mathcal{G \cap H'},\\
-2^{n+e} \chi_1((1+c^{-1})\beta) &~if~ (\alpha, \beta) \in \mathcal{H \cap G'},\\
2^{n+2e} \chi_1((1+c^{-1})\beta) &~if~ (\alpha, \beta) \in \mathcal{H \cap H'}.\\
  \end{cases}
\end{equation*}
\end{enumerate}
\item Let $A\neq0, B\neq0.$\\
\begin{enumerate}
 \item If $A'=0, B'\neq0,$ then $S_{c\alpha,c^{-1}\beta}=0$ and hence
 \[ T_b^{[4(a)]} = 0. \]
 
 \item If $A'\neq0, B'\neq0,$ let
 \begin{align*}
\mathcal{I'} = & \{(\alpha, \beta) \in \mathcal{D'} \mid A' \neq g^{t(2^e+1)}~\mbox{for any integer}~t \}, \\
\mathcal{J'} = &\{(\alpha, \beta) \in \mathcal{D'} \mid A' = g^{t(2^e+1)}~\mbox{for some integer}~t,\\ 
& L_{A'}(x) = (B')^{2^k}~\mbox{is not solvable}\},\\ 
\mathcal{K'} = &\{(\alpha, \beta) \in \mathcal{D'} \mid A' = g^{t(2^e+1)}~\mbox{for some integer}~t,\\ 
&{\rm Tr}_e(A')\neq0,  L_{A'}(x) = (B')^{2^k}~\mbox{is solvable}  \}, \\ 
\mathcal{L'} = &\{(\alpha, \beta) \in \mathcal{D'} \mid A' = g^{t(2^e+1)}~\mbox{for some integer}~t,\\ 
& {\rm Tr}_e(A')=0, L_{A'}(x) = (B')^{2^k}~\mbox{is solvable} \}.
\end{align*}
Then, 
\begin{equation*}
T_b^{[4(b)]}=
  \begin{cases}
   2^n\cdot M  &~if~ (\alpha, \beta) \in \mathcal{(I \cup K) \cap (I' \cup K') },\\
   0 &~if~ (\alpha, \beta) \in \mathcal{(I \cup K \cup L) \cap J'},\\
   -2^{n+e}\cdot M &~if~ (\alpha, \beta) \in \mathcal{(I \cup K) \cap L'},\\
   0 &~if~ (\alpha, \beta) \in \mathcal{J \cap (I' \cup J' \cup K' \cup L')},\\
   -2^{n+e} \cdot M &~if~ (\alpha, \beta) \in \mathcal{L \cap (I' \cup K')},\\
   2^{n+2e} \cdot M &~if~ (\alpha, \beta) \in \mathcal{L \cap L'},
  \end{cases}
\end{equation*}
where $M = \chi_1((1+c^{-1})\beta) \chi_1\left(AA'x_{A}^{2^k+1}x_{A'}^{2^k+1}\right)$ and $x_{A}, x_{A'}$ are the solutions of the equations $L_A(x) = B^{2^k}$ and $L_{A'}(x) = (B')^{2^k}$, respectively.
\end{enumerate} 
\end{enumerate}
We now summarize the above discussion in the following theorem.
\begin{thm}
Let $F(x)=x^{2^k+1}$, $1\leq k < n$ be a function on $\F_{2^n}$, $n\geq2$. Let $c \in \F_{2^e} \backslash \{0,1\}$ and $n/e$ be even, where $e=\gcd(k,n)$. With the previous notations,  the $c$-BCT entry $_c\cB_F(1,b)$ of $F$ at $(1,b)$ is given by  
\begin{equation*}
\begin{split}
\frac{1}{q^2}  &\left( \sum_{(\alpha, \beta) \in \mathcal{A \cap A'}} \chi_1 (b (\alpha+\beta))T_b^{[1(a)]} + \sum_{(\alpha, \beta) \in \mathcal{A \cap G'}} \chi_1 (b (\alpha+\beta))T_b^{[1(b)]} \right. \\
&\left. + \sum_{(\alpha, \beta) \in \mathcal{A \cap H'}} \chi_1 (b (\alpha+\beta))T_b^{[1(b)]} + \sum_{(\alpha, \beta) \in \mathcal{A' \cap G}} \chi_1 (b (\alpha+\beta))T_b^{[3(a)]} \right. \\
&\left. + \sum_{(\alpha, \beta) \in \mathcal{A' \cap H}} \chi_1 (b (\alpha+\beta))T_b^{[3(a)]} + \sum_{(\alpha, \beta) \in \mathcal{G \cap G'}} \chi_1 (b (\alpha+\beta))T_b^{[3(b)]} \right. \\ &\left. +\sum_{(\alpha, \beta) \in \mathcal{G \cap H'}} \chi_1 (b (\alpha+\beta))T_b^{[3(b)]} + \sum_{(\alpha, \beta) \in \mathcal{H \cap G'}} \chi_1 (b (\alpha+\beta))T_b^{[3(b)]} \right. \\ &\left. + \sum_{(\alpha, \beta) \in \mathcal{H \cap H'}} \chi_1 (b (\alpha+\beta))T_b^{[3(b)]} + + \sum_{(\alpha, \beta) \in \mathcal{(I \cup K) \cap (I' \cup K') }} \chi_1 (b (\alpha+\beta))T_b^{[4(b)]} \right. \\ 
&\left. + \sum_{(\alpha, \beta) \in \mathcal{(I \cup K) \cap L'}} \chi_1 (b (\alpha+\beta))T_b^{[4(b)]} + \sum_{(\alpha, \beta) \in \mathcal{L \cap (I' \cup K')}} \chi_1 (b (\alpha+\beta))T_b^{[4(b)]} \right. \\ 
&\left. + \sum_{(\alpha, \beta) \in \mathcal{L \cap L'}} \chi_1 (b (\alpha+\beta))T_b^{[4(b)]}\right).
\end{split}
\end{equation*}
\end{thm}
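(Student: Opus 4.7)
The plan is to evaluate the formula
\[
_c\cB_F(1,b) = \frac{1}{q^2}\sum_{\alpha,\beta \in \F_q}\chi_1(b(\alpha+\beta))\, S_{\alpha,\beta}\, S_{c\alpha, c^{-1}\beta}
\]
from Section~\ref{S3} by splitting the sum over $\F_q^2$ into the pieces produced by the case analysis preceding the theorem, and then collecting the non-zero contributions. The partition is built in two stages: first by whether $A=\alpha+\beta$ and $B=\beta^{2^{n-k}}+\beta$ vanish, yielding $\mathcal{A},\mathcal{B},\mathcal{C},\mathcal{D}$ as in Section~\ref{S4}; and second by the analogous vanishing conditions on $A'=c\alpha+c^{-1}\beta$ and $B'=(c^{-1}\beta)^{2^{n-k}}+c^{-1}\beta$, further refined by whether $A$, $A'$ lie in the subgroup of $(2^e+1)$-th powers and by solvability of $L_A(x)=B^{2^k}$ and $L_{A'}(x)=(B')^{2^k}$. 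This produces the auxiliary sets $\mathcal{G},\mathcal{H},\mathcal{I},\mathcal{J},\mathcal{K},\mathcal{L}$ already introduced for the $c=1$ analysis, together with their primed counterparts $\mathcal{G'},\mathcal{H'},\mathcal{I'},\mathcal{J'},\mathcal{K'},\mathcal{L'}$ defined in the enumeration above. A crucial simplification is that, since $c\in\F_{2^e}^*$, we have $\beta\in\F_{2^e}\Leftrightarrow c^{-1}\beta\in\F_{2^e}$, so the events $B=0$ and $B'=0$ coincide; this identity collapses many a priori possible intersections and keeps the surviving list as short as claimed.

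With the partition in hand I would walk down the enumeration of $T_b=S_{\alpha,\beta}\,S_{c\alpha,c^{-1}\beta}$ given before the theorem and discard the contributions that vanish. The zero cases are precisely $T_b^{[2]}$ (where $\alpha=\beta\notin\F_{2^e}$ forces $S_{\alpha,\beta}=0$), $T_b^{[4(a)]}$ (where the symmetric condition $c\alpha=c^{-1}\beta$ with $c^{-1}\beta\notin\F_{2^e}$ forces $S_{c\alpha,c^{-1}\beta}=0$), and the sub-pieces of $T_b^{[4(b)]}$ indexed by $\mathcal{(I\cup K\cup L)\cap J'}$ and $\mathcal{J\cap(I'\cup J'\cup K'\cup L')}$, where the unsolvability clause of Lemma~\ref{even} forces one of the two Weil sums to vanish. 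Everything else contributes; substituting the tabulated values of $T_b$, which all carry the common phase $\chi_1((1+c^{-1})\beta)$ arising from $\chi_1(\beta)\chi_1(c^{-1}\beta)=\chi_1((1+c^{-1})\beta)$, assembles the thirteen summands listed in the statement. A final division by $q^2$ completes the identity.

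The main obstacle is purely organizational: one must verify that the thirteen index sets appearing in the statement are pairwise disjoint and together account for every non-zero piece of the enumeration, and that the signs $(-1)^{m/e}$ and the $2^e$-powers inherited from Lemma~\ref{even} (distinguishing $\Tr_e(A)=0$ versus $\neq 0$ and $A=g^{t(2^e+1)}$ versus not) have been bundled into the prefactors $T_b^{[1(a)]},\ldots,T_b^{[4(b)]}$ consistently. No new analytic input is required beyond what has been established in Sections~\ref{S2}--\ref{S3}; the identity is a bookkeeping consequence of the preceding case analysis.
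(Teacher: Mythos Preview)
Your proposal is correct and follows essentially the same approach as the paper. Indeed, the paper does not supply a separate proof for this theorem at all: it is stated as a summary of the preceding case analysis, and your outline faithfully reproduces that structure, including the key observation that $c\in\F_{2^e}^*$ forces $B=0\Leftrightarrow B'=0$, the identification of the vanishing pieces $T_b^{[2]}$, $T_b^{[4(a)]}$, and the $\mathcal{J}$/$\mathcal{J'}$ sub-cases of $T_b^{[4(b)]}$, and the purely bookkeeping nature of the remaining assembly.
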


\section{The general case} \label{S6}

Since the case $c\in \F_{2^e}$ has already been considered in previous sections, throughout this section we assume that $c\in \F_{2^n} \backslash \F_{2^e}$. Recall that for any fixed $b\neq0$, the $c$-BCT entry is given by,
\[_c\cB_F(1,b) =\frac{1}{q^2}\sum_{\alpha,\beta \in \F_q}\chi_1\left(b\left(\alpha+\beta \right) \right) S_{\alpha, \beta} S_{c\alpha, c^{-1}\beta}.  \]
Let us denote $ \displaystyle T_b =  S_{\alpha,\beta}S_{c\alpha, c^{-1}\beta}$. Recall that $A=\alpha+\beta$, $B = \beta^{2^{n-k}}+\beta$, $A' = c\alpha+c^{-1}\beta$ and $B'= (c^{-1}\beta)^{2^{n-k}}+ c^{-1}\beta.$ Notice that, when $c\in \F_{2^n} \backslash \F_{2^e}$ then $\beta \in \F_{2^e}^*$, and so,  $\beta c^{-1} \in \F_{2^n}\backslash \F_{2^e}$, otherwise $c\in \F_{2^e}$. Thus $B=0=B'$ if and only if $\beta=0$. Also, observe that the conditions $A=0=A'$ if and only if $\alpha=0=\beta$. Now we shall consider two cases namely, $\frac{n}{e}$ is odd and $\frac{n}{e}$ is even, respectively.\\
\textbf{Case 1}: $\frac{n}{e}$ is odd.

\begin{enumerate}
 \item Let $A=0, B=0.$\\
 Notice that the cases $A'=0, B' \neq 0$, and $A'\neq 0, B' =0$ would not arise, therefore, we shall calculate $T_b$ in remaining two cases only.
 \begin{enumerate}
  \item If $A'=0, B' = 0$, then
 \[ T_b^{[1(a)]} = \chi_1((1+c^{-1})\beta)\, q^2 .\]
  \item If $A'\neq0, B'\neq0$, then 
  \begin{equation*}
  T_b^{[1(b)]} = 
  \begin{cases}
   0 &~if~{\rm Tr}_e (B'(\gamma')^{-1}) \neq 1,\\
   \left(\frac{2}{n/e} \right)^e 2^{\frac{3n+e}{2}}  \chi_1((1+c^{-1})\beta) &~if~{\rm Tr}_e (B'(\gamma')^{-1}) = 1.
  \end{cases}
 \end{equation*}
 \end{enumerate} 
 \item Let $A=0, B\neq0.$
 In this case $S_{\alpha, \beta}=0$ and hence
 $$T_b^{[2]} = 0.$$
 \item Let $A\neq0, B=0.$
 Again, $S_{\alpha, \beta}=0$ and hence
 $$T_b^{[3]} = 0.$$
 \item Let $A\neq0, B\neq0.$
 \begin{enumerate}
  \item If $A'=0, B'=0$, then 
  \begin{equation*}
  T_b^{[4(a)]} = 
  \begin{cases}
   0 &~if~{\rm Tr}_e (B\gamma^{-1}) \neq 1,\\
   \left(\frac{2}{n/e} \right)^e 2^{\frac{3n+e}{2}}  \chi_1((1+c^{-1})\beta) &~if~{\rm Tr}_e (B\gamma^{-1}) = 1.
  \end{cases}
 \end{equation*}
\item If $A' = 0, B' \neq 0$, then $S_{c\alpha, c^{-1}\beta}=0$ and hence
 $$T_b^{[4(b)]}=0.$$
\item If $A' \neq 0, B' = 0$, then again $S_{c\alpha, c^{-1}\beta}=0$ and hence
 $$T_b^{[4(c)]}=0.$$
 \item If $A'\neq0, B'\neq0$, then the only relevant case is  and 
 
 \begin{equation*}
  T_b^{[4(d)]} = 
  \begin{cases}
    2^{n+e}  \chi_1((1+c^{-1})\beta) &~if~(\alpha, \beta) \in \mathcal{F \cap F'},\\
    0 &~otherwise.
  \end{cases}
 \end{equation*}
 \end{enumerate}
\end{enumerate}
We now summarize the above discussion in the following theorem.

\begin{thm}
 Let $F(x)=x^{2^k+1}$, $1\leq k < n$ be a function on $\F_{2^n}$, $n\geq2$. Let $c \in \F_{2^n} \backslash \F_{2^e}$ and $n/e$ be odd, where $e=\gcd(k,n)$. Then the $c$-BCT entry $_c\cB_F(1,b)$ of $F$ at $(1,b)$ is given by 
\begin{equation*}
\begin{split}
1+ \frac{2^{\frac{e}{2}}}{2^n} \sum_{(\alpha, \beta)\in \mathcal{(A \cap F') \cup (A' \cap F)}} \chi_1(b\alpha+(1+c^{-1}+b)\beta)) \\
+ \frac{2^e}{2^n} \sum_{(\alpha, \beta)\in \mathcal{F \cap F'}} \chi_1(b\alpha+(1+c^{-1}+b)\beta)).
\end{split}
\end{equation*}
\end{thm}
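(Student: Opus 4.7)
The plan is to invoke the product decomposition
\[
_c\cB_F(1,b) = \frac{1}{q^2}\sum_{\alpha,\beta \in \F_q}\chi_1\bigl(b(\alpha+\beta)\bigr)\, T_b,
\]
with $T_b = S_{\alpha,\beta}\, S_{c\alpha,c^{-1}\beta}$, and assemble the statement by direct substitution of the case-by-case values of $T_b$ already tabulated immediately above the theorem.

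First I would record the structural simplifications that are specific to $c \in \F_{2^n}\setminus\F_{2^e}$: namely $B=0 \Leftrightarrow B'=0 \Leftrightarrow \beta=0$; $A=A'=0 \Leftrightarrow (\alpha,\beta)=(0,0)$; and, for each inner $S$-sum, the vanishing forced by Lemma~\ref{odd} whenever $A$ (respectively $A'$) is nonzero and $B$ (respectively $B'$) is zero, or vice versa. These observations collapse the a priori sixteen sub-cases into the four that actually contribute, corresponding to the index sets $\mathcal{A}\cap\mathcal{A'}=\{(0,0)\}$, $\mathcal{A}\cap\mathcal{F'}$, $\mathcal{A'}\cap\mathcal{F}$, and $\mathcal{F}\cap\mathcal{F'}$, with values $T_b^{[1(a)]}$, $T_b^{[1(b)]}$, $T_b^{[4(a)]}$, and $T_b^{[4(d)]}$, respectively.

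Next I would plug in and simplify. The $\mathcal{A}\cap\mathcal{A'}$ piece reduces to $q^2\chi_1(0)/q^2 = 1$, accounting for the leading constant. The contributions on $\mathcal{A}\cap\mathcal{F'}$ and $\mathcal{A'}\cap\mathcal{F}$ carry the same $T_b$-value (up to the Jacobi-symbol normalization from Lemma~\ref{odd}), and so they pool into a single sum over the disjoint union $(\mathcal{A}\cap\mathcal{F'})\cup(\mathcal{A'}\cap\mathcal{F})$. The elementary identity $\chi_1\bigl(b(\alpha+\beta)\bigr)\chi_1\bigl((1+c^{-1})\beta\bigr) = \chi_1\bigl(b\alpha+(1+c^{-1}+b)\beta\bigr)$ then converts the $\chi_1$-factor absorbed inside $T_b$ into the character appearing in the statement. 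Finally, $T_b^{[4(d)]} = 2^{n+e}\chi_1\bigl((1+c^{-1})\beta\bigr)$ on $\mathcal{F}\cap\mathcal{F'}$ yields the last summand after the $1/q^2 = 2^{-2n}$ normalization.

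The main obstacle, and the step where the bookkeeping is most delicate, is ruling out contributions from every remaining ``mixed'' sub-case (intersections with $\mathcal{B}$, $\mathcal{C}$, or with sets on the primed side where a trace condition fails) and confirming that the set-theoretic constraints implied by a non-vanishing $S_{\alpha,\beta}$ really are compatible with those implied by $S_{c\alpha,c^{-1}\beta}$. This is exactly where the hypothesis $c\notin\F_{2^e}$ (and its companion observation $c^{-1}\beta\in\F_{2^n}\setminus\F_{2^e}$ whenever $\beta\in\F_{2^e}^*$) does the work: it prevents $\mathcal{C}$- and $\mathcal{D}$-type cases from being paired up with admissible counterparts on the primed side. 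Once these vanishings are verified, the four-term sum outlined above assembles into the stated formula.
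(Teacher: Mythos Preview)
Your proposal is correct and follows essentially the same approach as the paper: the paper's proof consists precisely of writing $_c\cB_F(1,b)$ as the sum of the four surviving pieces indexed by $\mathcal{A}\cap\mathcal{A'}$, $\mathcal{A}\cap\mathcal{F'}$, $\mathcal{F}\cap\mathcal{A'}$, and $\mathcal{F}\cap\mathcal{F'}$, substituting the tabulated values $T_b^{[1(a)]}$, $T_b^{[1(b)]}$, $T_b^{[4(a)]}$, $T_b^{[4(d)]}$, and absorbing the $\chi_1((1+c^{-1})\beta)$ factor into the character exactly as you describe. Your remark about the Jacobi-symbol normalization is apt---indeed the paper's own proof records the middle coefficient as $\left(\frac{2}{n/e}\right)^e 2^{(e-n)/2}$ rather than the $2^{e/2}/2^n$ printed in the theorem statement.
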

\begin{proof}
 \begin{equation*}
\begin{split}
_c\cB_F(1,b) = \frac{1}{q^2} &\left( \sum_{(\alpha, \beta)\in \mathcal{A \cap A'}} \chi_1(b(\alpha+\beta))T_b^{[1(a)]}+ \sum_{(\alpha, \beta)\in \mathcal{A \cap F'}} \chi_1(b(\alpha+\beta))T_b^{[1(b)]} \right. \\ 
& \left. + \sum_{(\alpha, \beta)\in \mathcal{F \cap A'}} \chi_1(b(\alpha+\beta))T_b^{[4(a)]}+ \sum_{(\alpha, \beta)\in \mathcal{F \cap F'}} \chi_1(b(\alpha+\beta))T_b^{[4(d)]} \right) \\
= 1+ & \left(\frac{2}{n/e} \right)^e \cdot 2^{\frac{e-n}{2}} \sum_{(\alpha, \beta)\in \mathcal{(A \cap F') \cup (A' \cap F)}} \chi_1(b\alpha+(1+c^{-1}+b)\beta))  \\ 
&  + 2^{e-n} \sum_{(\alpha, \beta)\in \mathcal{F \cap F'}} \chi_1(b\alpha+(1+c^{-1}+b)\beta)). 
\end{split}
\end{equation*}
\end{proof}
\begin{cor}
 Let $F(x)=x^{2^k+1}$, $1\leq k < n$, be a function on $\F_{q}$,   $n\geq2$. Let $c \in \F_{2^n} \backslash \F_{2^e}$ and $n/e$ be odd, where $e=\gcd(k,n)$.  With the notations of the previous theorem,  the $c$-boomerang uniformity  of $F$ satisfies
 \[ 
 \beta_{F,c}\leq 1 + \left(\frac{2}{n/e} \right)^e \cdot 2^{\frac{e-n}{2}}|\mathcal{(A \cap F') \cup (A' \cap F)}|+2^{e-n} |  \mathcal{F \cap F'}|.
 \] 
\end{cor}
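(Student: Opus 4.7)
The plan is to deduce this corollary directly from the explicit formula for $_c\cB_F(1,b)$ established in the preceding theorem, by taking absolute values and applying the triangle inequality to the two character sums on the right-hand side.

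Recalling that $\beta_{F,c} = \max_{b \in \F_{2^n}^*} {_c\cB_F}(1,b)$, I would begin with the identity
\begin{align*}
{_c\cB_F}(1,b)
&= 1 + \left(\tfrac{2}{n/e}\right)^e 2^{(e-n)/2}\sum_{(\alpha,\beta)\in (\cA\cap\cF')\cup(\cA'\cap\cF)}\chi_1\bigl(b\alpha+(1+c^{-1}+b)\beta\bigr) \\
&\quad + 2^{e-n}\sum_{(\alpha,\beta)\in \cF\cap\cF'}\chi_1\bigl(b\alpha+(1+c^{-1}+b)\beta\bigr),
\end{align*}
obtained in the proof of the previous theorem. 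Since $\chi_1$ is the canonical additive character of the characteristic-$2$ field $\F_{2^n}$, its values lie in $\{-1,+1\}$, so every summand has modulus~$1$; similarly the Jacobi symbol $\left(\tfrac{2}{n/e}\right)^e$ takes values in $\{-1,+1\}$, hence has modulus~$1$. The triangle inequality therefore bounds each character sum in absolute value by the cardinality of its indexing set. Taking the maximum over $b \in \F_{2^n}^*$ does not affect the right-hand side, which is manifestly independent of~$b$, and the stated bound follows.

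There is essentially no obstacle here beyond bookkeeping, since the preceding theorem has already done the difficult work of collapsing the quadruple sum into these two character sums together with the additive constant~$1$. The only substantive remark is that the bound is typically far from tight: the inner character sums generally exhibit substantial cancellation, and sharpening the estimate would require a finer analysis of the distribution of the pairs $(\alpha,\beta)$ within $\cF\cap\cF'$ and $(\cA\cap\cF')\cup(\cA'\cap\cF)$, plausibly via another application of Poisson summation in the spirit of the proof of Theorem~\ref{generalization}.
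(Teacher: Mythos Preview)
Your proposal is correct and matches the paper's (implicit) approach: the corollary is stated without proof in the paper, as it follows immediately from the formula in the preceding theorem by bounding each character value by~$1$ and applying the triangle inequality, exactly as you do. Your observation that the indexing sets are independent of~$b$ (so the bound survives the maximum) is the only point worth making explicit, and you handle it correctly.
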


\textbf{Case 2}: $n/e$ is even.

\begin{enumerate}
 \item Let $A=0, B=0.$
 Notice that the cases $A'=0, B' \neq 0$, and $A'\neq 0, B' =0$ would not arise, therefore, we shall calculate $T_b$ in remaining two cases only.
 \begin{enumerate}
  \item If $A'=0, B' = 0$, then
 \[ T_b^{[1(a)]} = \chi_1((1+c^{-1})\beta)\, q^2 .\]
 \item If $A' \neq0, B' \neq0 $, then
 \begin{equation*}
  T_b^{[1(b)]} = 
  \begin{cases}
  (-1)^{\frac{m}{e}} 2^{m+n} M'  &~if~ (\alpha, \beta) \in \mathcal{A \cap (I' \cup K')},\\
  0 &~if~ (\alpha, \beta) \in \mathcal{A \cap J'},\\
  (-1)^{\frac{m}{e}+1} 2^{m+n+e}M'  &~if~ (\alpha, \beta) \in \mathcal{A \cap L'},   
  \end{cases}
 \end{equation*}
 where $M' = \chi_1((1+c^{-1})\beta) \chi_1(A'x_{A'}^{2^k+1}).$
\end{enumerate} 
 
 \item Let $A=0, B\neq0.$
 In this case $S_{\alpha,\beta}=0$ and hence
 $$T_b^{[2]} = 0$$
 \item Let $A\neq0, B=0.$
 Notice that the case $A'=0, B'=0$ would not arise. Now we shall calculate $T_b$ in the remaining cases.
 \begin{enumerate}
  \item If $A'=0, B'\neq0,$ then $S_{c\alpha, c^{-1}\beta}=0$ and hence \[ T_b^{[3(a)]}=0.\]
  \item If $A'\neq 0, B'=0,$ then
  \begin{equation*}
   T_b^{[3(b)]} =
   \begin{cases}
   2^{n} \chi_1((1+c^{-1})\beta) &~if~ (\alpha, \beta) \in \mathcal{G \cap G'},\\
   -2^{n+e} \chi_1((1+c^{-1})\beta) &~if~ (\alpha, \beta) \in \mathcal{G \cap H'}, \\
   -2^{n+e} \chi_1((1+c^{-1})\beta) &~if~ (\alpha, \beta) \in \mathcal{H \cap G'}, \\
   2^{n+2e} \chi_1((1+c^{-1})\beta) &~if~ (\alpha, \beta) \in \mathcal{H \cap H'}.\\
   \end{cases}
  \end{equation*}
  \item If $A' \neq 0, B' \neq 0$, then
  \begin{equation*}
  T_b^{[3(c)]} = 
  \begin{cases}
  2^{n} M' &~if~ (\alpha, \beta) \in \mathcal{G \cap (I' \cup K')},\\
  0 &~if~ (\alpha, \beta) \in \mathcal{(G \cup H) \cap J'},\\
  -2^{n+e} M' &~if~ (\alpha, \beta) \in \mathcal{G \cap L'},\\
-2^{n+e} M' &~if~ (\alpha, \beta) \in \mathcal{H \cap(I' \cup K')},\\
2^{n+2e} M' &~if~ (\alpha, \beta) \in \mathcal{H \cap L')}.\\
  \end{cases}
 \end{equation*}  
\end{enumerate}
\item Let $A\neq0, B\neq0.$
\begin{enumerate}
 \item If $A'=0, B'=0,$ then
 \begin{equation*}
  T_b^{[4(a)]} = 
  \begin{cases}
  (-1)^{\frac{m}{e}} 2^{m+n} M'' &~if~ (\alpha, \beta) \in \mathcal{A' \cap (I \cup K)},\\
  0 &~if~ (\alpha, \beta) \in \mathcal{A' \cap J},\\
  (-1)^{\frac{m}{e}+1} 2^{m+n+e} M'' &~if~ (\alpha, \beta) \in \mathcal{A' \cap L} ,  
  \end{cases}
 \end{equation*}
 where $M''=\chi_1((1+c^{-1})\beta) \chi_1(Ax_{A}^{2^k+1}).$
 \item If $A'=0, B'\neq0$, then $S_{c\alpha, c^{-1}\beta}=0$ and hence
 \[T_b^{[4(b)]}=0.\]
 \item If $A'\neq0, B'=0$, then
 \allowdisplaybreaks
 \begin{equation*}
  T_b^{[4(c)]} = 
  \begin{cases}
  2^{n} M'' &~if~ (\alpha, \beta) \in \mathcal{G' \cap (I \cup K)},\\
  0 &~if~ (\alpha, \beta) \in \mathcal{(G' \cup H') \cap J},\\
  -2^{n+e} M'' &~if~ (\alpha, \beta) \in \mathcal{G' \cap L},\\
-2^{n+e} M'' &~if~ (\alpha, \beta) \in \mathcal{H' \cap(I \cup K)},\\
2^{n+2e} M'' &~if~ (\alpha, \beta) \in \mathcal{H' \cap L}.\\
  \end{cases}
 \end{equation*} 
 \item If $A' \neq0, B'\neq0$, then
  \allowdisplaybreaks
 \begin{equation*}
T_b^{[4(d)]}=
  \begin{cases}
   2^n  M''' &~if~ (\alpha, \beta) \in \mathcal{(I \cup K) \cap (I' \cup K') },\\
   0 &~if~ (\alpha, \beta) \in \mathcal{(I \cup K \cup L) \cap J'},\\
   -2^{n+e} M''' &~if~ (\alpha, \beta) \in \mathcal{(I \cup K) \cap L'},\\
   0 &~if~ (\alpha, \beta) \in \mathcal{J \cap (I' \cup J' \cup K' \cup L')},\\
   -2^{n+e} M''' &~if~ (\alpha, \beta) \in \mathcal{L \cap (I' \cup K')},\\
   2^{n+2e} M''' &~if~ (\alpha, \beta) \in \mathcal{L \cap L'},
  \end{cases}
\end{equation*}
where $M'''=\chi_1((1+c^{-1})\beta) \chi_1(Ax_{A}^{2^k+1}+A'x_{A'}^{2^k+1}).$ 
\end{enumerate}
\end{enumerate}
We now summarize the above discussion in the form of following theorem.
\begin{thm}
Let $F(x)=x^{2^k+1}$, $1\leq k < n$ be a function on $\F_{2^n}$, $n\geq2$. Let $c\in \F_{2^n}\backslash \F_{2^e}$ and $n/e$ be even, where $e=\gcd(k,n)$. With the prior notations, the $c$-BCT entry $_c\cB_F(1,b)$ of $F$ at $(1,b)$ is given by  
 \allowdisplaybreaks
\begin{align*}
\frac{1}{q^2} &\left( \sum_{(\alpha, \beta) \in \mathcal{A \cap A'}} \chi_1 (b (\alpha+\beta))T_b^{[1(a)]} + \sum_{(\alpha, \beta) \in \mathcal{A \cap (I' \cup K')}} \chi_1 (b (\alpha+\beta))T_b^{[1(b)]} \right. \\ 
& \left. +\sum_{(\alpha, \beta) \in \mathcal{A \cap L'}} \chi_1 (b (\alpha+\beta))T_b^{[1(b)]} + \sum_{(\alpha, \beta) \in \mathcal{G \cap G'}} \chi_1 (b (\alpha+\beta))T_b^{[3(b)]} \right. \\
&\left. +\sum_{(\alpha, \beta) \in \mathcal{G \cap H'}} \chi_1 (b (\alpha+\beta))T_b^{[3(b)]} + \sum_{(\alpha, \beta) \in \mathcal{H \cap G'}} \chi_1 (b (\alpha+\beta))T_b^{[3(b)]} \right. \\
& \left. +\sum_{(\alpha, \beta) \in \mathcal{H \cap H'}} \chi_1 (b (\alpha+\beta))T_b^{[3(b)]} + \sum_{(\alpha, \beta) \in \mathcal{G \cap(I' \cup K')}} \chi_1 (b (\alpha+\beta))T_b^{[3(c)]} \right. \\
& \left. + \sum_{(\alpha, \beta) \in \mathcal{G \cap L'}} \chi_1 (b (\alpha+\beta))T_b^{[3(c)]} +  \sum_{(\alpha, \beta) \in \mathcal{H \cap (I' \cup K') }} \chi_1 (b (\alpha+\beta))T_b^{[3(c)]} \right. \\
& \left. + \sum_{(\alpha, \beta) \in \mathcal{H \cap L'}} \chi_1 (b (\alpha+\beta))T_b^{[3(c)]} + \sum_{(\alpha, \beta) \in \mathcal{A' \cap (I \cup K)}} \chi_1 (b (\alpha+\beta))T_b^{[4(a)]} \right. \\
& \left. + \sum_{(\alpha, \beta) \in \mathcal{A' \cap L}} \chi_1 (b (\alpha+\beta))T_b^{[4(a)]}+
\sum_{(\alpha, \beta) \in \mathcal{G' \cap (I\cup K) }} \chi_1 (b (\alpha+\beta))T_b^{[4(c)]} \right. \\
& \left. + \sum_{(\alpha, \beta) \in \mathcal{G' \cap L}} \chi_1 (b (\alpha+\beta))T_b^{[4(c)}+ \sum_{(\alpha, \beta) \in \mathcal{H' \cap (I \cup K) }} \chi_1 (b (\alpha+\beta))T_b^{[4(c)]} \right. \\
& \left. + \sum_{(\alpha, \beta) \in \mathcal{H' \cap L}} \chi_1 (b (\alpha+\beta))T_b^{[4(c)]} + \sum_{(\alpha, \beta) \in \mathcal{(I \cup K) \cap (I' \cup K')}} \chi_1 (b (\alpha+\beta))T_b^{[4(d)]} \right. \\
& \left. + \sum_{(\alpha, \beta) \in \mathcal{(I \cup K) \cap L'}} \chi_1 (b (\alpha+\beta))T_b^{[4(d)]}+ \sum_{(\alpha, \beta) \in \mathcal{(I' \cup K') \cap L}} \chi_1 (b (\alpha+\beta))T_b^{[4(d)]} \right. \\
& \left. + \sum_{(\alpha, \beta) \in \mathcal{L' \cap L}} \chi_1 (b (\alpha+\beta))T_b^{[4(d)]} \right).
 \end{align*}
\end{thm}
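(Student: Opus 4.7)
The plan is to mirror the structure of the proof of the preceding theorem in Section~\ref{S5} and simply integrate the case analysis that was just completed immediately before the statement. The starting point is the identity
\[
_c\cB_F(1,b) =\frac{1}{q^2}\sum_{\alpha,\beta \in \F_q}\chi_1\!\left(b(\alpha+\beta)\right) S_{\alpha, \beta}\, S_{c\alpha, c^{-1}\beta},
\]
so everything reduces to partitioning $\F_q \times \F_q$ and, on each piece of the partition, substituting the appropriate value of $T_b = S_{\alpha, \beta}\, S_{c\alpha, c^{-1}\beta}$ already computed.

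First I would record the partition. The outer split is governed by whether $A = \alpha+\beta$ and $B = \beta^{2^{n-k}}+\beta$ are zero or not, and similarly for $A' = c\alpha + c^{-1}\beta$ and $B' = (c^{-1}\beta)^{2^{n-k}} + c^{-1}\beta$. The observation made before the statement --- that for $c \in \F_{2^n}\setminus \F_{2^e}$ one has $B = 0 \iff B' = 0 \iff \beta = 0$, and $A = 0 = A' \iff \alpha = \beta = 0$ --- eliminates several combinations a priori, leaving exactly the four top-level cases $(A,B),(A',B') \in \{(0,0), (\neq 0, 0), (0,\neq 0),(\neq 0,\neq 0)\}$ that were enumerated as $[1(a)]$ through $[4(d)]$. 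Within the $(A, A' \neq 0)$ strata, one further refines by whether $A$ (respectively $A'$) is of the form $g^{t(2^e+1)}$ and, when it is, whether $L_A(x) = B^{2^k}$ (respectively $L_{A'}(x) = (B')^{2^k}$) is solvable and whether $\Tr_e(A) = 0$; these refinements give exactly the sets $\mathcal{G},\mathcal{H},\mathcal{I},\mathcal{J},\mathcal{K},\mathcal{L}$ (and their primed counterparts) from Sections~\ref{S4}--\ref{S5}.

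Next, I would verify that the union of all the intersections appearing in the statement, together with the terms that have been discarded (those on which $T_b = 0$), exhausts $\F_q \times \F_q$; this is essentially a bookkeeping check. For each remaining piece I substitute the value of $T_b^{[\cdot]}$ read off from the case-by-case list preceding the theorem, noting that on $\mathcal{A} \cap \mathcal{A}'$ (reduced to $(0,0)$) the factor $\chi_1(b(\alpha+\beta))$ equals $1$, and on the other non-vanishing strata the factors $\chi_1(b(\alpha+\beta))$ and the $\chi_1((1+c^{-1})\beta)$ appearing inside $M, M', M'', M'''$ combine as stated. Summing the resulting contributions, dividing by $q^2$, and collapsing strata on which $T_b^{[\cdot]} = 0$ yields precisely the displayed formula.

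The only potentially delicate point is the aforementioned bookkeeping: ensuring that the enumeration is exhaustive and disjoint, in particular that every pair $(\alpha,\beta) \in \F_q^2$ lies in exactly one of the subcases, and that each non-zero contribution is picked up with the correct sign and power of $2$ dictated by Lemmas~\ref{zero} and~\ref{even}. Once this is verified, no further estimation or character-sum manipulation is required, and the theorem follows directly by assembling the pieces. Therefore the proof is essentially a careful bookkeeping exercise, and the main obstacle is simply to avoid miscounting --- in particular, correctly handling the boundary cases forced by $c \notin \F_{2^e}$, namely the exclusion of the configurations $(A = 0, B \neq 0)$ paired with $(A' = 0, B' = 0)$ and similar asymmetric combinations.
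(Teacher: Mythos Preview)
Your proposal is correct and matches the paper's approach exactly: the theorem is stated as a summary of the case analysis carried out immediately before it, and the ``proof'' consists precisely of partitioning $\F_q\times\F_q$ by the values of $A,B,A',B'$, substituting the already-computed values of $T_b$, and dropping the vanishing pieces. No additional ideas are needed beyond the bookkeeping you describe.
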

\section{discussion on equivalence}\label{S7}
Boura and Canteaut~\cite{BC} showed that the BCT table is preserved under the affine equivalence but not under the extended affine equivalence (and consequently under the CCZ-equivalence). It is quite natural to ask a similar question in the context of $c$-BCT. It is straightforward to see that in the case of even characteristic, $c$-BCT and $c^{-1}$-BCT entries of an $(n,n)$-function $F: \F_{2^n} \to \F_{2^n}$ are the same under the transformations $x \mapsto x+a$ and $y \mapsto y+a$, since the $c$-boomerang system
\begin{align*}
 \begin{cases}
  F(x)+cF(y)=b \\
  F(x+a)+c^{-1}F(y+a)=b
 \end{cases}
\end{align*}
becomes 
\begin{align*}
 \begin{cases}
  F(x)+c^{-1}F(y)=b \\
  F(x+a)+cF(y+a)=b.
 \end{cases}
\end{align*}
We consider the binomial $G(x)=x^{2^k+1}+ux^{2^{n-k}+1} \in \F_{2^n}[x]$, which is a PP if and only if $\displaystyle \frac{n}{e}$ is odd and $\displaystyle u\neq g^{t(2^e-1)}$, where $e=\gcd(n,k)= \gcd(n-k,k)$ and $g$ is the primitive element of $\F_{2^n}.$ Notice that $G(x)= (L \circ F)(x)$ where $L(x)=x^{2^k}+ux$ and $F(x)= x^{2^{n-k}+1}.$ When $n=6, k=2$ and $u=g$, where $g$ is a root of the primitive polynomial $y^6 + y^4 + y^3 + y + 1$ over $\F_{2}$, then $L(x)$ and $G(x)$ are PP. It is easy to see from the Table~\ref{T1} in the Appendix~\ref{app}
that the $c$-BCT is not preserved under the (output applied) affine equivalence. However, if the affine transformation is applied to the input, that is, $G(x)=(F\circ L)(x)$, then the $c$-BCT spectrum is preserved, as was the case for the $c$-differential uniformity.

\section*{Acknowledgements}
Sartaj Ul Hasan is partially supported by MATRICS grant MTR/2019/000744 from the Science and Engineering Research Board, Government of India.

\appendix 
\section{} \label{app}
Let $G(x)=x^5+gx^{17} = (x^4+gx) \circ x^{17} \in \F_{2^6}[x],$  where $g$ is a root of the primitive polynomial $y^6 + y^4 + y^3 + y + 1$ over $\F_{2}$. The following Table~\ref{T1} gives the set of the $c$-BCT entries for $x^{17}$ as well as $G(x)=x^5+gx^{17}$ for all $c\in \F_{2^n}\backslash \F_{2}.$ In view of the discussion in Section~\ref{S7}, it is sufficient to compute the set of $c$-BCT entries for either of $c$ or $c^{-1}$ as they are going to be exactly the same.

 \begin{table} [h!]
 \begin{center}

 \begin{tabular}{ |c|c|c| } 
 
\hline
 $c$ & Set of $c$-BCT entries of $x^{17}$ & Set of $c$-BCT entries of $G(x)$ \\
 \hline
 $g$ & \{0,1,2,3,4\} & \{0,1,2,3,4,5\} \\ 
 \hline 
 $g^2$ & \{0,1,2,3,4\} & \{0,1,2,3,4,5\} \\ 
 \hline
 $g^3$ & \{0,1,2,3,5\} & \{0,1,2,3,4,5\} \\ 
 \hline
 $g^4$ & \{0,1,2,3,4\} & \{0,1,2,3,4,5\} \\ 
 \hline
 $g^5$ & \{0,1,2,3,4\} & \{0,1,2,3,4,5,6\} \\ 
 \hline
 $g^6$ & \{0,1,2,3,5\} & \{0,1,2,3,4,5,6\} \\ 
 \hline
 $g^7$ & \{0,1,2,3,4,5\} & \{0,1,2,3,4,5,6\} \\ 
 \hline
 $g^8$ & \{0,1,2,3,4\} & \{0,1,2,3,4,5\} \\ 
 \hline
 $g^9$ & \{0,1,2,3,4\} & \{0,1,2,3,4,5,6\} \\ 
 \hline
 $g^{10}$ & \{0,1,2,3,4\} & \{0,1,2,3,4,5\} \\ 
 \hline 
 $g^{11}$ & \{0,1,2,3\} & \{0,1,2,3,4,5\} \\ 
 \hline 
 $g^{12}$ & \{0,1,2,3,5\} & \{0,1,2,3,4,5\} \\ 
 \hline
 $g^{13}$ & \{0,1,2,3\} & \{0,1,2,3,4,5\} \\ 
 \hline
 $g^{14}$ & \{0,1,2,3,4,5\} & \{0,1,2,3,4,5,6\} \\ 
 \hline 
 $g^{15}$ & \{0,1,2,3,5\} & \{0,1,2,3,4,5\} \\ 
 \hline
 $g^{16}$ & \{0,1,2,3,4\} & \{0,1,2,3,4,5\} \\ 
 \hline 
 $g^{17}$ & \{0,1,2,3,4\} & \{0,1,2,3,4,5,6\} \\ 
 \hline 
 $g^{18}$ & \{0,1,2,3,4\} & \{0,1,2,3,4,5,6\} \\ 
 \hline
 $g^{19}$ & \{0,1,2,3\} & \{0,1,2,3,4,5\} \\ 
 \hline
 $g^{20}$ & \{0,1,2,3,4\} & \{0,1,2,3,4,5,6\} \\ 
 \hline
 $g^{21}$ & \{0,1,4\} & \{0,1,4\} \\ 
 \hline
 $g^{22}$ & \{0,1,2,3\} & \{0,1,2,3,4,5\} \\ 
 \hline
 $g^{23}$ & \{0,1,2,3,4\} & \{0,1,2,3,4,5\} \\ 
 \hline
 $g^{24}$ & \{0,1,2,3,5\} & \{0,1,2,3,4,5,6\} \\ 
 \hline 
 $g^{25}$ & \{0,1,2,3\} & \{0,1,2,3,4,5\} \\ 
 \hline
 $g^{26}$ & \{0,1,2,3\} & \{0,1,2,3,4,5\} \\ 
 \hline
 $g^{27}$ & \{0,1,2,3,4\} & \{0,1,2,3,4,5,6\} \\ 
 \hline
 $g^{28}$ & \{0,1,2,3,4,5\} & \{0,1,2,3,4,5,6\} \\ 
 \hline
 $g^{29}$ & \{0,1,2,3,4\} & \{0,1,2,3,4,5\} \\ 
 \hline
 $g^{30}$ & \{0,1,2,3,5\} & \{0,1,2,3,4,5,6\} \\ 
 \hline
 $g^{31}$ & \{0,1,2,3,4\} & \{0,1,2,3,4,5\} \\ 
 \hline
\end{tabular}
\caption{$c$-BCT entries of $x^{17}$ and $x^{5}+gx^{17}$ } \label{T1}
\end{center}
\end{table}
\end{document}